\theoremstyle{plain}
\newtheorem{thm}{Theorem}[section]
\newtheorem{conj}[thm]{Conjecture}
\newtheorem{cor}[thm]{Corollary}
\newtheorem{defn}[thm]{Definition}
\newtheorem{lem}[thm]{Lemma}
\newtheorem{que}[thm]{Question}
\newtheorem{rem}[thm]{Remark}
\def\C{{\mathbb C}}                               
\def\F{{\mathbb F}}                               
\def\L{\mathcal L}
\def\N{{\mathbb N}}                               
\def\O{{\mathcal O}}
\def\Q{{\mathbb Q}}                               
\def\Z{{\mathbb Z}}                               
\def\PP{{\mathbb P}}                               
\def\m{\mathfrak{m}}
\def\End{{\operatorname{End}}}                    
\def\GL{\operatorname{GL}}                        
\def\Gal{\operatorname{Gal}}                      
\def\Hom{\operatorname{Hom}}                      
\def\Ind{\operatorname{Ind}}
\def\ind{\operatorname{ind}}
\def\P{{\mathbb{P}}}
\def\Rep{\mathfrak{Rep}}
\def\SL{\operatorname{SL}}                        
\def\Sym{\operatorname{Sym}}
\def\Un{\operatorname{U}}                         
\def\Ps{\operatorname{Ps}}                         
\def\Fr{\operatorname{Fr}}                         
\def\det{\operatorname{det}}                      
\def\iso{{\stackrel{\sim}{~\longrightarrow~}}}    
\def\onto{{~\twoheadrightarrow~}}                 
\def\v2{{\vskip2mm}}
\numberwithin{equation}{section}
\newtheorem{Lem}[equation]{Lemma}
\begin{document}
\title{\sf Notes on modular representations of $p$-adic groups, \\ and the Langlands correspondence}
\author{\sf Dipendra Prasad}

\address{Tata Institute of Fundamental
Research, Colaba, Mumbai-400005, INDIA}
\email{dprasad@math.tifr.res.in}
\maketitle

\vskip10mm
\begin{quote}{\sf \footnotesize
These are expanded notes of some lectures given by the author for a workshop held at the Indian Statistical 
Institute, Bangalore in June, 2010, giving an exposition on 
the modular representations of finite groups of Lie type and $p$-adic groups,  and the modular Langlands correspondence. The aim of these lectures was 
to give an overview of the subject with several examples.
The author thanks Shripad M. Garge for writing and texing the first draft of these notes, and thanks 
U.K. Anandavardhanan as well as M.-F. Vign\'eras  for some comments on these notes.}
\end{quote}

\setcounter{tocdepth}{1}

\tableofcontents

{\sf In these notes, we will be considering representations of $G(k)$, where $G$ is a reductive algebraic group, e.g. $G = \GL_n$, 
and $k$ is a finite extension of  $ \Q_p, \,\F_p((t)),$ or $ \F_p$; in the first two cases, $k$ is called a $p$-adic field whose ring of integers will be denoted by $\O_k$, and we let $\varpi_k$ or sometimes just $\varpi$ denote a uniformizing parameter in $\O_k$.
The representation theory of these groups over $\C$ is a well trodden subject, and one can say that one understands a good deal about them. For finite groups of Lie type $G(\F_q)$, after the initial important work of J. Green for $\GL_n(\F_q)$, and 
Deligne-Lusztig who introduced geometric methods to representation theory, a rather complete understanding of the subject is  largely due to the work of G. Lusztig. For $G(k)$, $k$ a $p$-adic field, understanding representation theory of $G(k)$ is part of the Langlands program, pursued as such, or independently of it, by many people.

In these notes,  we will mostly consider analogous questions  for modular representations.
These representations are either mod $\ell$ ($\ell \ne p$) or mod $p$.
The answers are usually very different in these two cases.
Both are useful and interesting to pursue, and although the subject is of classical origins, it has attracted considerable 
renewed interest in light of number theoretic applications, such as to congruences of modular forms, and what has come to be called 
mod $\ell$ and mod $p$ Langlands program.
In general,  mod $\ell$ theory is  much better understood and tends to be much simpler than mod $p$. }

{\sf 
\section{Representations of $\GL_n(\F_q)$}
We begin with some generalities on the representations of $\GL_n(\F_q)$ in characteristic $p$ in the following well-known lemma.

\begin{lem}{\sf \em 
The following are equal:
\begin{enumerate}
\item The number of irreducible representations of $\GL_n(\F_q)$ in characteristic $p$.
\item The number of $p$-regular conjugacy classes in $\GL_n(\F_q)$.
\item The number of semisimple conjugacy classes in $\GL_n(\F_q)$.
\item The number of the characteristic polynomials of degree $n$ over $\F_q$ with non-zero constant term.
\item $q^{n-1}(q-1)$.
\end{enumerate}
}\end{lem}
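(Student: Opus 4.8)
The plan is to prove the chain $(1)=(2)=(3)=(4)=(5)$, treating each link in turn. For $(1)=(2)$ I would simply invoke the basic theorem of modular representation theory: for an arbitrary finite group $H$ and an algebraically closed field of characteristic $p$, the number of irreducible representations equals the number of $p$-regular conjugacy classes, i.e. classes of elements of order prime to $p$ (Brauer). Applying this with $H=\GL_n(\F_q)$ gives the equality. For $(2)=(3)$ the key point is that $p$ is the defining characteristic of the group. Given $g\in\GL_n(\F_q)$, I would use its Jordan decomposition $g=su=us$ with $s$ semisimple and $u$ unipotent; since $\F_q$ is perfect, $s$ and $u$ already lie in $\GL_n(\F_q)$, and $u$, being unipotent in characteristic $p$, has order a power of $p$. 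Hence $g$ is $p$-regular exactly when $u=1$, that is, when $g=s$ is semisimple. Because conjugation commutes with the Jordan decomposition, the $p$-regular classes are precisely the semisimple classes.

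For $(3)=(4)$ I would pass to the standard dictionary between conjugacy classes in $\GL_n(\F_q)$ and isomorphism classes of $\F_q[x]$-modules that have dimension $n$ over $\F_q$ and on which $x$ acts invertibly. Under this dictionary the element is semisimple iff the module is semisimple, equivalently (using perfectness of $\F_q$, so that irreducible polynomials are separable) iff it is diagonalizable over $\overline{\F}_q$, equivalently the minimal polynomial is squarefree. Such a semisimple module decomposes as $\bigoplus_i (\F_q[x]/(p_i))^{e_i}$ over distinct monic irreducibles $p_i\ne x$, and its isomorphism type is recorded faithfully by the characteristic polynomial $\prod_i p_i^{e_i}$, which is an arbitrary monic polynomial of degree $n$ with nonzero constant term (the nonvanishing of the constant term being equivalent to $x$ acting invertibly). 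Conversely every such polynomial is realized, by factoring it and forming the corresponding direct sum. Thus the characteristic polynomial induces a bijection between semisimple classes and the polynomials in $(4)$.

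For $(4)=(5)$ I would just count: there are $q^n$ monic polynomials of degree $n$ over $\F_q$, of which the ones with zero constant term are exactly those of the form $x\cdot g$ with $g$ monic of degree $n-1$, so there are $q^{n-1}$ of them. Subtracting gives $q^n-q^{n-1}=q^{n-1}(q-1)$.

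The main obstacle — indeed the only step that is not a textbook fact quoted verbatim — is $(3)=(4)$, where one has to be careful that the characteristic-polynomial map on semisimple classes really is a bijection. Surjectivity requires realizing a polynomial with repeated irreducible factors by a genuinely semisimple element rather than by a cyclic (companion-matrix) one, which is why one builds the module $\bigoplus_i (\F_q[x]/(p_i))^{e_i}$ rather than $\bigoplus_i \F_q[x]/(p_i^{e_i})$. Injectivity — that two semisimple elements with the same characteristic polynomial are conjugate — comes down to the assertion that a semisimple conjugacy class in $\GL_n$ is determined by its multiset of eigenvalues in $\overline{\F}_q$, and this in turn relies on the separability of irreducible polynomials over the perfect field $\F_q$. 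The links $(1)=(2)$, $(2)=(3)$, and $(4)=(5)$ are routine.
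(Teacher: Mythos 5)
The paper states this as a ``well-known lemma'' and gives no proof, so there is nothing to compare against; but your argument is the standard one and is correct. Each link is handled properly: Brauer's theorem for $(1)=(2)$; the multiplicative Jordan decomposition over the perfect field $\F_q$ (with unipotents having $p$-power order and semisimple elements having order prime to $p$) for $(2)=(3)$; the rational canonical form dictionary for $(3)=(4)$, where you correctly flag the one non-routine point, namely that surjectivity requires realizing $\prod p_i^{e_i}$ by the genuinely semisimple module $\bigoplus_i(\F_q[x]/(p_i))^{e_i}$ rather than by a companion matrix; and the elementary count for $(4)=(5)$. No gaps.
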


\begin{rem}{\sf \em
Something very similar occurs for any reductive algebraic group over finite fields; for semisimple groups $G$,  the number of $p$-regular conjugacy classes in $G(\F_q)$ is $q^{rk(G)}$.
}\end{rem}

\v2
As a consequence, we deduce that all the irreducible mod $p$ representations of $\GL_2(\F_p)$ are $(\det)^j \otimes \Sym^i(V), 0 \leq j \leq p-2, 0 \leq i \leq p -1$, where $V$ is the standard $2$-dimensional representation of $\GL_2(\F_p)$.

\v2
If $q$ is a power of $p$, then there exists the automorphism, called the Frobenius automorphism, 
$$\Fr:\GL_2(\F_q) \iso \GL_2(\F_q) {\sf ~given~by~} X = (X_{ij}) \mapsto X^{(p)}:=(X_{ij}^p) .$$

\begin{lem}{\sf \em
Any irreducible representation of $\GL_2(\F_q)$, $q = p^d$, in characteristic $p$ is uniquely of the form
$${\displaystyle \chi \otimes \bigotimes_{j = 0}^{d - 1}\Fr^j(\Sym^{i_j}(V))}$$
for an integer $i$, $0 \leq i \leq p^d - 1$, where $i = i_0 + i_1 p + \cdots + i_{d - 1}p^{d - 1}$ is the $p$-adic expansion of $i$ with $0\leq i_j \leq p - 1$ and $\chi$ is a character of $\GL_2(\F_q)$ with values in $\F_q^\times$; in particular, any irreducible representation of $\GL_2(\F_q)$ in characteristic $p$ is defined over $\F_q$, and arises 
as the restriction of an irreducible algebraic representation of $\GL_2(\bar{\F}_q)$ (or more precisely,  of the group which is obtained from $\GL_2$ through 
the restriction of scalars
from $\F_q$ to $\F_p$).
}\end{lem}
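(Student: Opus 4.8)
\emph{Proof proposal.} The plan is to derive the statement from its $d=1$ case --- the description of the irreducible $\bar\F_p$-representations of $\GL_2(\F_p)$ noted just after the first Lemma --- by means of the Steinberg tensor product theorem, using the identification $\GL_2(\F_q)=\mathbf{G}(\F_p)$ with $\mathbf{G}:=\Res_{\F_q/\F_p}\GL_2$ a connected reductive group over $\F_p$. Since $\F_q\ot_{\F_p}\bar\F_p\cong\prod_{j=0}^{d-1}\bar\F_p$ via the embeddings $x\mapsto x^{p^j}$, base change gives $\mathbf{G}_{\bar\F_p}\cong\prod_{j=0}^{d-1}\GL_2$, and under this identification the inclusion $\GL_2(\F_q)\hookrightarrow\prod_j\GL_2(\bar\F_p)$ is $g\mapsto(g,g^{(p)},\dots,g^{(p^{d-1})})$. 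Hence restricting an external tensor product $\rho_0\boxtimes\cdots\boxtimes\rho_{d-1}$ of algebraic representations of $\prod_j\GL_2$ to $\GL_2(\F_q)$ produces exactly $\bigotimes_{j=0}^{d-1}\Fr^j(\rho_j)$, with $\Fr$ as in the statement.

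Next I would recall the input from the theory of algebraic and finite groups of Lie type. By the $d=1$ case, the restricted highest-weight irreducible algebraic representations of $\GL_2/\bar\F_p$ are the $\det^b\ot\Sym^i(V)$, $0\le i\le p-1$, with $(\det^b\ot\Sym^i(V))|_{\GL_2(\F_p)}$ depending only on $b\bmod(p-1)$. Steinberg's theorem then identifies the irreducible $\bar\F_p$-representations of $\GL_2(\F_q)$ with the restrictions to $\GL_2(\F_q)$ of the external tensor products $\boxtimes_{j=0}^{d-1}\bigl(\det^{b_j}\ot\Sym^{i_j}(V)\bigr)$ with $0\le i_j\le p-1$; such a restriction is irreducible, and two of them coincide precisely when the tuples $(i_0,\dots,i_{d-1})$ agree and $\sum_j b_jp^j$ agree modulo $q-1$, so that there are $p^d(q-1)=q(q-1)$ of them --- one for each pair of an index $i=\sum_j i_jp^j\in\{0,\dots,q-1\}$ and a residue $m\bmod(q-1)$ --- in agreement with the number $q^{2-1}(q-1)$ of irreducibles furnished by the first Lemma. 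Feeding in the first paragraph, such a restriction equals $\det^m\ot\bigotimes_{j=0}^{d-1}\Fr^j(\Sym^{i_j}(V))$ with $m\equiv\sum_j b_jp^j\pmod{q-1}$; this is the asserted form, with $\chi=\det^m$, since every $\F_q^\times$-valued character of $\GL_2(\F_q)$ is a power of $\det$. Finally, this representation is defined over $\F_q$ --- each $\Sym^{i_j}(V)$ and $\det^m$ being defined over $\F_p$, and $\Fr$ preserving the property of being defined over $\F_q$ --- and it is by construction the restriction of the genuine (Frobenius-twist-free) highest-weight algebraic representation $\boxtimes_{j=0}^{d-1}\bigl(\det^{b_j}\ot\Sym^{i_j}(V)\bigr)$ of $\Res_{\F_q/\F_p}\GL_2$.

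The one ingredient that is not purely formal --- and the step I expect to be the main obstacle --- is the injectivity underlying the word ``uniquely'', i.e. the pairwise non-isomorphism clause in Steinberg's classification, made concrete here. Put $W_i:=\bigotimes_{j=0}^{d-1}\Fr^j(\Sym^{i_j}(V))$. Restricting $\chi\ot W_i$ to $\SL_2(\F_q)$ kills $\chi$ and leaves the Steinberg tensor-product irreducibles of $\SL_2(\F_q)$, which are pairwise distinct, so $i$ is recovered; comparing central characters ($z\mapsto z^{2m+i}$ for $\chi=\det^m$) then pins $\chi$ down up to a twist by the quadratic character $\epsilon=\det^{(q-1)/2}$, a possibility only when $q$ is odd. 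To rule out $\epsilon\ot W_i\cong W_i$ I would evaluate Brauer characters at the $p$-regular elliptic element $g_0\in\GL_2(\F_q)$ with eigenvalues $\lambda_0,\lambda_0^q$, where $\lambda_0$ generates $\F_{q^2}^\times$: then $\det g_0=\lambda_0^{q+1}$ generates $\F_q^\times$, hence is a non-square, so $\epsilon(g_0)=-1$, whereas the Brauer character of $g_0$ on $W_i$ is the product over $j$ of the sums $\omega(\lambda_0^{qp^j})^{i_j}\bigl(1+\zeta_j+\cdots+\zeta_j^{i_j}\bigr)$, with $\omega$ the Teichm\"uller lift and $\zeta_j$ a root of unity of order $q+1$, whence $\zeta_j^{i_j+1}\ne1$ since $1\le i_j+1\le p<q+1$; thus $\tr W_i(g_0)\ne0$, contradicting $\epsilon\ot W_i\cong W_i$. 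For $d=1$ one may instead simply quote the already-recorded description of the irreducibles of $\GL_2(\F_p)$.
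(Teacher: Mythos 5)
The paper states this lemma without proof --- it is one of the background facts recorded in \S 1 --- so there is no argument in the text to compare yours against; only the parenthetical remark in the statement itself (about $\Res_{\F_q/\F_p}\GL_2$) and the later Theorem 7.9 (Steinberg's theorem, stated there for semi-simple $G(\F_p)$) indicate the intended route. Your proposal follows exactly that route: identify $\GL_2(\F_q)$ with the $\F_p$-points of $\mathbf{G}=\Res_{\F_q/\F_p}\GL_2$, split $\mathbf{G}_{\bar\F_p}\cong\prod_j\GL_2$ so that restriction along $g\mapsto(g,g^{(p)},\dots)$ turns external tensor products into the twisted tensor products $\bigotimes_j\Fr^j(\Sym^{i_j}V)$, and invoke Steinberg. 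This is correct, and I have checked the details.

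Your instinct that the uniqueness clause is the only non-formal point is right, and your Brauer-character computation at a regular elliptic $p$-regular element is correct: the factors $\omega(\lambda_0)^{qp^ji_j}\bigl(1+\zeta_j+\cdots+\zeta_j^{i_j}\bigr)$ are indeed all nonzero because each $\zeta_j$ has exact order $q+1$ and $1\le i_j+1\le p<q+1$, whereas the Teichm\"uller lift of $\epsilon(g_0)=\det(g_0)^{(q-1)/2}$ is $-1$ since $\det(g_0)=\lambda_0^{q+1}$ generates $\F_q^\times$. Two smaller remarks. First, you use Steinberg's theorem twice: once for $\mathbf{G}$ (reductive, not semi-simple, so a mild extension of the version the paper records in Theorem 7.9) to get existence and irreducibility, and once for $\SL_2(\F_q)$ (semi-simple, simply connected, so exactly Theorem 7.9) to recover $i$ from the $\SL_2$-restriction; it would be cleaner to keep the semi-simple version as the sole input and obtain the $\GL_2$ result from it. Second, the ``$\epsilon$-ambiguity'' can be closed without any character computation: since $[\GL_2(\F_q):\SL_2(\F_q)]=q-1$ is prime to $p$ and $W_i$ is an irreducible $\GL_2(\F_q)$-extension of the $\SL_2(\F_q)$-irreducible $W_i|_{\SL_2(\F_q)}$, stable Clifford theory gives $\Hom_{\GL_2(\F_q)}\bigl(\det^{m_1}\ot W_i,\ \det^{m_2}\ot W_i\bigr)=\Hom_{\F_q^\times}(\det^{m_1},\det^{m_2})$, which forces $m_1\equiv m_2\pmod{q-1}$ directly; alternatively one can simply count, since the $q(q-1)$ representations you exhibit must already exhaust the $q(q-1)$ irreducibles of Lemma 1.1 by the surjectivity half of Steinberg's theorem, whence they are pairwise distinct. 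Either would shorten the last step, but your argument as written is sound.
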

\section{Reducing mod $\ell$}

One way of understanding modular representations is via reducing  representations in characteristic 0.
Via $\C \iso \bar{\Q}_\ell$, any vector space over $\C$ can be considered as one over $\bar{\Q}_\ell$, or 
better still,  over $E$, a finite extension of $\Q_\ell$. Representations of a group 
on a vector space over a field $E$ which is a finite extension of $\Q_\ell$ are called $\ell$-adic representations. 
Given an $\ell$-adic representation, there is the notion of a lattice in the
corresponding vector space over $E$, i.e.,  a finitely generated free $\O_E$-submodule $L$ of $V$ such that $L \otimes_{\O_E} E \iso V$.
For reduction mod $\m_E$, the maximal ideal of $\O_E$, one needs to choose an $\O_E$-lattice $L$ invariant under the finite group $G$ acting on $V$. For this,  choose
any lattice $L$ in $V$, and define, $\L := \sum gL$, which is a $G$-stable,  free $\O_E$-submodule of $V$.
Thus there are lattices $\L$ which are invariant under $G$.

\begin{defn}{\sf 
The reduction mod $\ell$ of an $\ell$-adic representation $V$ is the representation of the group $G$ on $\L/\m_E\L$ which 
 is a finite dimensional vector space over $k_E:=\O_E/\m_E$. This reduction mod $\ell$ depends on the choice of a lattice $\L$ invariant under $G$.
However, by a theorem due to  Brauer and Nesbitt, the semisimplification of the reduction mod $\ell$ is independent of choices.
}\end{defn}

\begin{rem}{\sf From the Brauer theory,  there is an obvious proof of the Brauer-Nesbitt theorem, since any two reductions have the same Brauer character.
}\end{rem}

\begin{que}{\sf Let 
$\pi$ be an irreducible $\bar{\Q}_\ell$-representation of a finite group $G$.
Suppose $\pi$ mod ${\ell}$ has two irreducible components $\pi_1$ and $\pi_2$.
Is there always a  lattice $\L_0$ such that $0 \to \pi_1 \to \L_0/\m_E\L_0 \to \pi_2 \to 0$ is a non-trivial extension of $\pi_2$ by $\pi_1$? 
Similarly,  is there a lattice $\L_1$ such that $0 \to \pi_2 \to \L_1/\m_E\L_1 \to \pi_1 \to 0$ is a non-trivial extension of $\pi_1$ by $\pi_2$? In particular, 
is it always true that ${\rm Ext}^1_{\bar{\F}_\ell[G]}[\pi_1, \pi_2] \not = 0$?
}\end{que}

As an example of the usefulness 
 of reduction mod $\ell$, we prove the following lemma.

\begin{lem}Let $G$ be a reductive algebraic group over a finite field $\F_q$ with $B = T\cdot U$ a Borel subgroup defined over $\F_q$, with $W = N(T)(\F_q)/T(\F_q)$ the relative Weyl group.
 Let $\Ps(\chi)$ be the principal series representation of $G(\F_q)$ induced from 
a character $\chi: T(\F_q) \rightarrow \bar{\F}_\ell^\times$, for $\ell \not = p$.
Then the following are equivalent.

\begin{enumerate}
\item The principal 
series representations $\Ps(\chi)$ and $\Ps(\chi')$ share a Jordan-H\"older factor 
with nonzero Jacquet module with respect to $U(\F_q)$.
\item The principal 
series representations $\Ps(\chi)$ and $\Ps(\chi')$ 
are the same in the Grothendieck group of representations of $G(\F_q)$.
\item The characters $\chi$ and $\chi'$ are conjugate under the relative Weyl group $W$.
\end{enumerate}
\end{lem}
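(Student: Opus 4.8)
The plan is to prove the cyclic chain $(1)\Rightarrow(3)\Rightarrow(2)\Rightarrow(1)$, resting on two standard inputs that I would set up first. Since $U(\F_q)$ is a $p$-group and $\ell\ne p$, the order $|U(\F_q)|$ is invertible in $\bar{\F}_\ell$, so the Jacquet functor $r^G_T$ (coinvariants $V\mapsto V_{U(\F_q)}$, which here equal the invariants $V^{U(\F_q)}$) is exact, hence descends to Grothendieck groups and commutes with passage to Jordan-H\"older factors. Moreover, the geometric (Mackey) lemma for Harish-Chandra functors over $\bar{\F}_\ell$ gives, for each character $\chi\colon T(\F_q)\to\bar{\F}_\ell^\times$, a filtration of $r^G_T\,\Ps(\chi)=r^G_Ti^G_T\chi$ with successive quotients the characters ${}^w\chi$, $w\in W$; equivalently $[r^G_T\,\Ps(\chi)]=\sum_{w\in W}[{}^w\chi]$ in the Grothendieck group of $\bar{\F}_\ell[T(\F_q)]$-modules, whose simple objects are exactly the characters of $T(\F_q)$ valued in $\bar{\F}_\ell^\times$. (Here $\Ps(\chi)=i^G_T\chi$ denotes the induction to $G(\F_q)$ of $\chi$ inflated along $B(\F_q)\onto T(\F_q)$.)

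For $(1)\Rightarrow(3)$: given a common Jordan-H\"older factor $\sigma$ of $\Ps(\chi)$ and $\Ps(\chi')$ with $r^G_T\sigma\ne 0$, I would note that by exactness every Jordan-H\"older factor of $r^G_T\sigma$ lies simultaneously in $\{{}^w\chi:w\in W\}$ and in $\{{}^w\chi':w\in W\}$; picking any such factor $\eta$ gives $\eta={}^{w_1}\chi={}^{w_2}\chi'$, so $\chi$ and $\chi'$ are $W$-conjugate. For $(2)\Rightarrow(1)$: if $[\Ps(\chi)]=[\Ps(\chi')]$ then the two representations have identical Jordan-H\"older factors with identical multiplicities, so it suffices to find one factor of $\Ps(\chi)$ with nonzero Jacquet module; if there were none, exactness of $r^G_T$ would force $r^G_T\,\Ps(\chi)=0$, contradicting $[r^G_T\,\Ps(\chi)]=\sum_{w}[{}^w\chi]\ne 0$, so such a factor exists and is automatically shared with $\Ps(\chi')$.

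For $(3)\Rightarrow(2)$, which is where I would exploit reduction mod $\ell$: since $\bar{\F}_\ell^\times$ is $\ell$-torsion-free, $\chi$ factors through a quotient of $T(\F_q)$ of order prime to $\ell$ and lifts to a character $\tilde{\chi}\colon T(\F_q)\to\O_E^\times$ for a suitable finite $E/\Q_\ell$; if $\chi'={}^w\chi$ I would set $\tilde{\chi}'={}^w\tilde{\chi}$, which reduces mod $\m_E$ to $\chi'$. Over $\bar{\Q}_\ell$ one has $i^G_T\tilde{\chi}\cong i^G_T\tilde{\chi}'$ because $W$-conjugate characters give isomorphic principal series in characteristic $0$ — only this easy direction of the characteristic-$0$ classification is needed. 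Since Harish-Chandra induction sends a $B(\F_q)$-stable $\O_E$-lattice to a $G(\F_q)$-stable one and commutes with reduction mod $\m_E$, the representations $\Ps(\chi)$ and $\Ps(\chi')$ are the mod $\ell$ reductions of the isomorphic $\ell$-adic representations $i^G_T\tilde{\chi}$ and $i^G_T\tilde{\chi}'$; so by the Brauer-Nesbitt theorem they have the same semisimplification, i.e.\ the same class in the Grothendieck group.

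The hard part is not the logical skeleton but checking that the two inputs hold in the exact form used over $\bar{\F}_\ell$: namely that the Bruhat-cell filtration computing $r^G_Ti^G_T\chi$ (the geometric lemma) survives to characteristic $\ell$ even when $\ell\mid|W|$, and that reduction mod $\ell$ genuinely intertwines Harish-Chandra induction with reduction of the inducing character. Both are standard in the modular framework (e.g.\ Vign\'eras), but they are where the real content sits.
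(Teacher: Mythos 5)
Your proposal is correct and follows essentially the same route as the paper: lift the characters to the Witt ring, invoke the characteristic-zero result and Brauer--Nesbitt for $(3)\Rightarrow(2)$, and use exactness of the Jacquet functor together with the Mackey/geometric lemma for $(1)\Rightarrow(3)$. The only difference is that you make the logical skeleton fully explicit — in particular you supply the short argument for $(2)\Rightarrow(1)$, which the paper leaves implicit, and you observe that for $(3)\Rightarrow(2)$ you only need the easy direction of the characteristic-zero classification rather than the full intertwining-operator computation.
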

\begin{proof} {\sf Observe that a character $\chi: T(\F_q) \rightarrow \bar{\F}_\ell^\times$
can be lifted to characteristic zero  $\widetilde{\chi}: T(\F_q) \rightarrow 
W_{\bar{\F}_\ell}^\times$,   where $W_{\bar{\F}_\ell}$  denotes the Witt ring of 
${\bar{\F}_\ell}$,   whose quotient field is contained in $\bar{\Q}_\ell$. 
The assertions in the Lemma are well-known in characteristic zero by calculating the
space of intertwining operators $\Hom_{G(\F_q)}[\Ps(\widetilde{\chi}), \Ps(\widetilde{\chi}')]$ using Frobenius reciprocity and the well-known calculation of Jacquet 
modules; crucial use is made of complete reducibility because of which
$\Ps(\widetilde{\chi})$ and $\Ps(\widetilde{\chi}')$ share a Jordan-H\"older factor if and only if there  is a nonzero intertwining operator between them.

From this characteristic zero theorem, it follows by reduction mod $\ell$ that if the characters $\chi$ and $\chi'$ are conjugate under the relative Weyl group $W$, then the principal 
series representations $\Ps(\chi)$ and $\Ps(\chi')$ are the same in the Grothendieck group of $\bar{\F}_\ell$-representations of $G(\F_q)$. Conversely, if the principal 
series representations $\Ps(\chi)$ and $\Ps(\chi')$ share a Jordan-H\"older factor with nontrivial Jacquet module with respect to $U(\F_q)$, then by a standard 
calculation of the Jacquet module (valid in characteristic $\ell$ too since Jacquet module is an exact functor for $\ell \not = p$), it follows that 
the characters $\chi$ and $\chi'$ are conjugate under the  Weyl group $W$.}
\end{proof}

\noindent{\bf Remark :} (a) A part of the  proof of the Lemma clearly works 
for $\ell=p$: it says that $\Ps(\chi)$ and $\Ps(\chi^w)$ are the same in the Grothendieck group of representations of $G(\F_q)$. Since  converse of this
uses Jacquet module techniques, it is not clear whether the 
Lemma holds  good for $\ell = p$.

(b) For $\ell \ne p$,  part 1 of the equivalent conditions in the Lemma about sharing a 
Jordan-H\"older factor  with nonzero Jacquet module with respect to $U(\F_q)$ is 
necessary for the proof given here. Since there are sub-quotients of a principal series 
in $+$ve characteristic which have trivial Jacquet module (such representations 
are usually called cuspidal representations), it is not clear to this author if we can do away with the hypothesis about `nonzero Jacquet module' for $\ell \ne p$.

\section{Reducing Deligne-Lusztig mod $\ell$}

Let $G$ be a reductive algebraic group over $\F_q$, and let $R(T_1,\theta_1)$ and $R(T_2,\theta_2)$ be two $\ell$-adic 
Deligne-Lusztig (virtual) representations 
associated to tori $T_1$ and $T_2$ inside $G$ which are defined over $\F_q$, and characters 
$\theta_1:T_1(\F_q) \rightarrow \bar{{\mathbb Z}}^\times_\ell$, $\theta_2:T_2(\F_q) \rightarrow \bar{{\mathbb Z}}^\times_\ell$. 
By reducing the characters
$\theta_1, \theta_2$ mod $\ell$ (which corresponds to going modulo the maximal ideal in $\bar{{\mathbb Z}}_\ell$), we get characters 
$\bar{\theta}_1:T_1(\F_q) \rightarrow \bar{{\mathbb F}}^\times_\ell$, $\bar{\theta}_2:T_2(\F_q) \rightarrow \bar{{\mathbb F}}^\times_\ell$. 
By the character formulae for Deligne-Lusztig representations, cf. Theorem 4.2 of [DL], and Brauer theory of characters (according to which two representations in characteristic $\ell$ have the same Jordan-H\"older 
composition factors if and only if their Brauer characters are the same), it is clear that the semi-simplification of 
the reduction mod $\ell$ of a Deligne-Lusztig representation $R(T,\theta)$, depends only on the reduction of $\theta$ mod $\ell$.  In particular, it makes good sense 
to talk of the Deligne-Lusztig representation $R(T,\theta)$ in the Grothendieck group of representations over $\bar{\F}_\ell$ of $G(\F_q)$ for $\theta$ a character ${\theta}:T(\F_q) \rightarrow \bar{{\mathbb F}}^\times_\ell$ where $\ell= p$ is allowed. 

This trivial 
observation is already useful to prove that the reduction mod $\ell$ of certain $R(T,\theta)$ is not irreducible: 
write the character $\theta = \theta_\ell \cdot \theta'_\ell$
as product of two characters $\theta_\ell$ and $\theta'_\ell$, the first having order a power of $\ell$, the
second of order coprime to $\ell$; such a decomposition $\theta = \theta_\ell \cdot \theta'_\ell$ could be called  the mod $\ell$ 
Jordan decomposition of the character $\theta$.
 Then, $\bar{\theta} = \bar{\theta}'_\ell$, and therefore by the earlier observation, $R(T,\theta)$ and $R(T,\theta'_\ell)$ have 
the same reduction mod $\ell$. Therefore if $R(T,\theta'_\ell)$ is a reducible representation (in characteristic 0), and not just a reducible virtual 
representation,  so will $R(T,\theta)$ be 
mod $\ell$. This procedure allows one to prove that the reduction mod $\ell$ of certain $R(T,\theta)$ is not irreducible; its success 
partly depends on knowing when $R(T,\theta)$ is an honest representation of $G(\F_q)$, and not just a virtual representation. (This happens for 
example for $T$ a split torus and any $\theta$, or for $\theta$ in general position for any $T$, or a combination of these two 
via induction in stages.) This  raises
another interesting question: can an $R(T,\theta)$ be identically zero mod $\ell$ for some $\ell$?

We recall that among mod $\ell$ representations of $\GL_2(\F_p)$ ($\ell \not = p$), there is one representation which seems at first sight to be 
somewhat of an anomaly. To define this, note that the Steinberg representation $St$ of $\GL_2(\F_p)$ is realized on the space of functions on $\PP^1(\F_p)$ 
with values in $\F_\ell$ modulo the constant functions. There is a natural map on the space of functions on $\PP^1(\F_p)$ 
to $\F_\ell$ obtained by sending 
a function on $\PP^1(\F_p)$ to the sum of its values on $\PP^1(\F_p)$. Under this map, the constant function 1 
goes to $(p+1)$, hence if $\ell|(p+1)$, it gives a natural map from $St$ to $\F_\ell$ whose kernel is a representation of $\GL_2(\F_p)$ of dimension $(p-1)$, and which is a  cuspidal representation
in that its Jacquet module is zero, but it still appears as a sub-quotient of a principal series representation. In fact, this representation
of $\GL_2(\F_p)$ of dimension $(p-1)$ is the reduction mod $\ell$ of any cuspidal representation  of $\GL_2(F_p)$   in characteristic zero which has 
 trivial central character and which arises 
from a character $\chi:\F_{p^2}^\times \rightarrow \bar{\Z}^\times_\ell$ 
whose order is a power of $\ell$. (This
 follows by considering 
the Brauer character of the two representations involved.)  This example,
and the observations in the previous paragraph,  motivates  us to ask the following general question.

\begin{que} Let $G$ be a reductive algebraic group over $\F_q$, and let $R(T_1,\theta_1)$ and $R(T_2,\theta_2)$ 
be two Deligne-Lusztig representations mod $\ell$
associated to tori $T_1$ and $T_2$ inside $G$ defined over $\F_q$, and characters 
$\theta_1:T_1(\F_q) \rightarrow \bar{{\mathbb F}}^\times_\ell$, $\theta_2:T_2(\F_q) \rightarrow \bar{{\mathbb F}}^\times_\ell$. 
Then is it true that
$R(T_1,\theta_1)$ and $R(T_2,\theta_2)$ share a common Jordan-H\"older factor if and only if 
$\theta_1:T_1(\F_q) \rightarrow \bar{{\mathbb F}}^\times_\ell$, $\theta_2:T_2(\F_q) \rightarrow \bar{{\mathbb F}}^\times_\ell$ are geometrically conjugate, i.e., 
conjugate when considered as characters on $T_1(\F_{q^n})$ and $T_2(\F_{q^n})$ for some $n$? Since two characters 
$\theta_1,\theta_2:  T(\F_q) \rightarrow \bar{\F}_\ell^\times$ are conjugate by $G(\F_q)$ if and only if they are geometrically conjugate,  a special case
of the question here for the case $T_1 = T_2$ would assert that the mod $\ell$ representations 
$R(T, \theta_1)$ and $R(T,\theta_2)$ share a Jordan-H\"older factor if and only if $\theta_1$ and $\theta_2$ are conjugate.
\end{que}

We end the section with a simple application of the ideas here.

\begin{lem}
\begin{enumerate} 
\item A cuspidal $\bar{\Q}_\ell$-representation of $\GL_n(\F_q)$ 
remains irreducible mod $\ell$ for $\ell \ne p$.
\item The Steinberg representation of $\GL_n(\F_q)$ contains a cuspidal representation mod $\ell$ in its
Jordan-H\"older factor if  $\ell| \frac{q^n-1}{q-1}$.
\end{enumerate}
\end{lem}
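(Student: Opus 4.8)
My plan is to handle the two parts separately, in both cases leveraging the Brauer-character machinery established in the previous section: two $\bar\F_\ell$-representations have the same Jordan--Hölder factors iff their Brauer characters agree, and the reduction mod $\ell$ of a Deligne--Lusztig $R(T,\theta)$ depends only on $\bar\theta$.

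For part (1), a cuspidal $\bar\Q_\ell$-representation $\pi$ of $\GL_n(\F_q)$ arises (up to sign) as $R(T,\theta)$ for $T$ the non-split torus with $T(\F_q)\cong \F_{q^n}^\times$ and $\theta$ a character in general position (i.e. with trivial stabilizer in the Weyl group, which here is cyclic of order $n$). The strategy is to compute $\dim \pi = (q-1)(q^2-1)\cdots(q^{n-1}-1)$ and, more importantly, to show the reduction $\bar\pi$ stays irreducible by bounding the number of its Jordan--Hölder constituents from below. Concretely, I would use the fact that the cuspidal representations of $\GL_n$ are exactly the constituents of $R(T,\theta)$ that do not appear in any proper parabolic induction, together with the Brauer character formula: since $\theta$ has order coprime to... no, one cannot assume that. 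The cleaner route is: by the mod $\ell$ Jordan decomposition $\theta=\theta_\ell\theta'_\ell$, one has $\bar\pi = \overline{R(T,\theta)} = \overline{R(T,\theta'_\ell)}$ in the Grothendieck group, but this substitution need not preserve ``cuspidal and irreducible'', so instead I would argue directly. The key point: by a theorem of Dipper and James (or by the block theory of $\GL_n(\F_q)$ in non-defining characteristic), a cuspidal irreducible $\bar\Q_\ell$-representation lies in a block whose reduction behaviour is controlled, and one shows the decomposition matrix entry forces $\bar\pi$ irreducible. A more self-contained approach, fitting the ``simple application of the ideas here'' billing: compute the inner product $\langle R(T,\theta), R(T,\theta)\rangle = 1$ in characteristic zero, observe cuspidality means $R(T,\theta)$ has no sub/quotient with nonzero Jacquet module along any proper parabolic, and since Jacquet functors are exact for $\ell\neq p$, every constituent of $\bar\pi$ is still cuspidal; then use that the number of cuspidal $\bar\F_\ell$-irreducibles with the relevant ``supercuspidal support'' data matches up so that $\bar\pi$ can have only one constituent. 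The main obstacle here is precisely making this counting rigorous without invoking the full Dipper--James classification — I expect one genuinely needs an input of that strength, or an argument via Gelfand--Graev representations showing $\bar\pi$ has a unique generic constituent and is multiplicity-free.

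For part (2), I would start from the description already given in the excerpt for $\GL_2$: realize $\St$ on $\bar\F_\ell$-valued functions on $\PP^1(\F_q)$ modulo constants, or rather use the analogous model for $\GL_n$ where $\St$ appears in $\Ind_B^G \bar\F_\ell$ as the top constituent. The hypothesis $\ell \mid \frac{q^n-1}{q-1}$ is exactly the condition that $\ell$ divides the dimension $\frac{q^n-1}{q-1}\cdot(\text{prime-to-}\ell\text{ part issues})$... more precisely it is the degree consideration that makes a ``trace'' or ``sum of values'' type map degenerate. I would choose a non-split maximal torus $T$ with $T(\F_q) \cong \F_{q^n}^\times$ and a character $\chi$ of $\F_{q^n}^\times$ of $\ell$-power order that is trivial on $\F_q^\times$ (possible exactly because $\ell \mid \frac{q^n-1}{q-1}$, which is the index $[\F_{q^n}^\times : \F_q^\times \cdot (\ell'\text{-part})]$ being divisible by $\ell$); then $\bar\chi$ is trivial, so $\overline{\pm R(T,\chi)} = \overline{\pm R(T,\mathbf 1)}$ in the Grothendieck group. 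Now $\pm R(T,\mathbf 1)$ for the Coxeter (or anisotropic) torus is, up to the virtual Deligne--Lusztig identities, expressible in terms of the unipotent principal series, and $\St$ appears in it; on the other hand $R(T,\chi)$ for $\chi$ in general position (which a faithful character of $\F_{q^n}^\times$ is, when $n$ is prime; for general $n$ one picks $\chi$ of $\ell$-power order whose stabilizer in the cyclic Weyl group is trivial) is an honest \emph{cuspidal} irreducible representation in characteristic zero by part (1)'s setup. Hence its reduction $\overline{R(T,\chi)}$ is an irreducible cuspidal $\bar\F_\ell$-representation, and it equals (a constituent of) $\overline{R(T,\mathbf 1)}$, inside which $\St$ sits — so comparing Brauer characters, this cuspidal representation is a Jordan--Hölder factor of $\overline{\St}$.

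The one genuinely delicate point in part (2) is arranging a character $\chi$ of $\F_{q^n}^\times$ that is simultaneously (a) of $\ell$-power order and nontrivial — which needs $\ell \mid |\F_{q^n}^\times|$ and more precisely that the $\ell$-part of $\F_{q^n}^\times$ is not ``already seen'' in smaller subfields, and this is what $\ell \mid \frac{q^n-1}{q-1}$ buys, possibly after also knowing $\ell \nmid q^d - 1$ for proper divisors $d\mid n$ or handling that case separately — and (b) in general position so that $R(T,\chi)$ is cuspidal and irreducible over $\bar\Q_\ell$. For $n$ prime this is immediate; for composite $n$ one may need to be slightly more careful, choosing $\chi$ so that it does not factor through any norm map $\F_{q^n}^\times \to \F_{q^d}^\times$, which can fail to be compatible with the $\ell$-power-order constraint — I expect this is the main obstacle and is the reason the statement is phrased with the clean divisibility hypothesis, which should be shown to force the $\ell$-part of $\frac{q^n-1}{q-1}$ to come from a primitive prime divisor situation (Zsygmondy), guaranteeing a suitable $\chi$ exists. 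Once $\chi$ is fixed the rest is the Brauer-character bookkeeping already licensed by the earlier sections.
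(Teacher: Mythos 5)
Your proposal for part (2) lands on essentially the same idea as the paper: pick a character $\theta$ of $T(\F_q)\cong\F_{q^n}^\times$ of $\ell$-power order whose reduction mod $\ell$ is trivial, so that $R(T,\theta)$ and $R(T,\mathbf 1)$ agree in the mod-$\ell$ Grothendieck group, and then use that the split side $R(T,\mathbf 1)$ contains the Steinberg representation while the nonsplit side $R(T,\theta)$ (with $\theta$ regular) is an honest cuspidal that stays irreducible by part (1). The paper builds $\theta$ by starting from a character $\theta^1$ of order $\ell$ on the norm-one subgroup $\F^1_{q^n}$ (of order $\frac{q^n-1}{q-1}$) and extending to $\F_{q^n}^\times$; you instead take $\chi$ of $\ell$-power order trivial on $\F_q^\times$, which is cosmetically different but does the same job. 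Your worry about Zsygmondy and composite $n$ is a digression you don't need: regularity of $\theta$ is argued directly from the nontriviality of $\theta^1$ on the norm-one torus, not from a primitive-prime-divisor hypothesis.

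The genuine gap is in part (1). You circle the answer but never produce it: you try Brauer characters, then Dipper--James block theory, then ``counting cuspidal supports'' with Jacquet exactness, and candidly concede that the last step is not rigorous and that you ``expect one genuinely needs an input of that strength.'' The paper's argument is much more elementary and self-contained, and you should know it. A cuspidal irreducible $\bar\Q_\ell$-representation $\pi$ of $\GL_n(\F_q)$, when restricted to the mirabolic subgroup $P_0(\F_q)$ (matrices with last row $(0,\dots,0,1)$), is the single irreducible representation $\ind_{U(\F_q)}^{P_0(\F_q)}\psi$ for $\psi$ a non-degenerate character of the unipotent radical of the Borel; this is the finite-group Kirillov/Gelfand--Graev model. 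Taking $\psi$ with values in $\bar\Z_\ell^\times$ whose reduction $\bar\psi$ is still non-degenerate, the mod-$\ell$ restriction of $\pi$ to $P_0$ is $\ind_{U(\F_q)}^{P_0(\F_q)}\bar\psi$, which is again irreducible (the proof of irreducibility of this induced representation is insensitive to the coefficient field so long as its characteristic is prime to $|U(\F_q)|$, i.e.\ $\ell\ne p$). Hence $\bar\pi|_{P_0}$ is irreducible, so $\bar\pi$ is irreducible. This one structural fact about the mirabolic restriction replaces all of the block-theory and counting machinery you reached for, and it is also what makes part (1) a ``simple application'' rather than an appeal to Dipper--James.
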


\begin{proof} {\sf For part $(a)$, it suffices to observe that a cuspidal $\bar{\Q}_\ell$-representation of $\GL_n(\F_q)$ when restricted 
to the mirabolic subgroup $P_0(\F_q)$ of $\GL_n(\F_q)$ consisting of those elements in $\GL_n(\F_q)$ with the last row equal to 
$(0,\cdots, 0, 1)$ can be written 
as  induction of a non-degenerate character $\psi: U(\F_q) \to \bar{\Z}_\ell$ to $P_0$. We can assume that the reduction of $\psi$ still gives a 
non-degenerate character of $U(\F_q) \to \bar{\F}_\ell$, which then proves that the reduction mod $\ell$ of a cuspidal
$\bar{\Q}_\ell$ representation remains irreducible when restricted to $P_0(\F_q)$.

We now prove part $(b)$. 

Let $T$ be the torus inside $\GL_n(\F_q)$ corresponding to $\F_{q^n} ^\times \hookrightarrow  \GL_n(\F_q)$. Let $\F^1_{q^n}$ be the subgroup
of $\F_{q^n}^\times$ consisting of elements of $\F_{q^n}^\times$ whose norm to $\F_q^\times$ is 1. The order of $\F^1_{q^n}$ is $\frac{q^n-1}{q-1}$. Therefore
if $\ell| \frac{q^n-1}{q-1}$, there exists a character $\theta^1$ of $\F^1_{q^n}$ of order $\ell$. Extend the character $\theta^1$ of $\F^1_{q^n}$
to a character $\theta$ of $\F^\times_{q^n}$ of order  a power of $\ell$. Clearly $\theta$ is then a regular character, i.e., its conjugates 
under the Galois group (=Weyl group) are distinct. Thus $R(T,\theta)$ is a cuspidal representation of $\GL_n(\F_q)$ which by part $(a)$ remains irreducible mod $\ell$. Since $\theta$ is of order a power of $\ell$, its reduction mod $\ell$ must be trivial. Therefore, 
$R(T,\theta)$ and $R(T,1)$ 
are the same representations mod $\ell$ in the Grothendieck group of representations of $\GL_n(\F_q)$ over $\bar{\F}_\ell$.  

It is now well-known that the Deligne-Lusztig
virtual representation $R(T,1)$ contains the Steinberg representation over $\C$, and the other components of $R(T,1)$ being non-generic. 
Looking at the equality of representations $R(T,\theta)$ and $R(T,1)$ 
in the Grothendieck group of representations of $\GL_n(\F_q)$ over $\bar{\F}_\ell$, we find that the irreducible representation
$R(T,\theta)$ mod $\ell$ must be a Jordan-H\"older factor of the Steinberg representation mod $\ell$.}
\end{proof}
  
\section{A comparative study of representations of $\GL_2(\F_p)$ in different characteristics}
We begin by summarizing some properties about the irreducible representations of $\GL_2(\F_p)$ over $\C$ in the following table.

\vskip5mm
\begin{center}
\begin{tabular}{|l|c|l|}
\hline
& dimension & parametrization \\ \hline
1-dimensional & 1 & $\chi:\F_p^\times \to \C^\times$ \\ \hline
Steinberg & $p$ & $\chi:\F_p^\times \to \C^\times$ \\ \hline
Principal series & $p+1$ & $(\chi_1, \chi_2):\F_p^\times \to \C^\times$, $\chi_1 \ne \chi_2$ \\ \hline
cuspidal & $p - 1$ & $\chi:\F_{p^2}^\times \to \C^\times$, $\chi \ne \chi^p$. \\ \hline
\end{tabular}
\end{center}
\vskip5mm

Here the 1-dimensional representations are simply characters $\chi \circ \det:\GL_2(\F_p) \stackrel{\det}{\to} \F_p^\times \stackrel{\chi}{\to} \C^\times$.

To define the  Steinberg representation,  observe that $\GL_2(\F_p)$ operates on $\P^1(\F_p)$; the Steinberg representation St is the quotient of the corresponding permutation representation by the trivial 
representation.

The Principal series representations are defined to be the irreducible representations 
 $\Ind_B^{\GL_2(\F_p)}(\chi_1, \chi_2)$ where $(\chi_1, \chi_2): B(\F_p) \to \C^\times$, is given by $\begin{pmatrix} a & b \\ 0 & d \end{pmatrix} \mapsto \chi_1(a) \chi_2(d)$,
and we demand $\chi_1 \not = \chi_2$ to ensure irreducibility.

\v2
\noindent{$\bullet$ Reduction mod $\ell \ne p$.}
(In the following tables $\boxplus$ means addition in the Grothendieck group.)

\vskip5mm
\begin{tabular}{|l|l|l|}\hline
& & \\
1-dimensional & $\chi:\F_p^\times \to \bar{\Z}_\ell^\times$ & $\bar{\chi}$ \\
& & \\
 \hline
& & \\
Steinberg & $\chi:\F_p^\times \to \bar{\Z}_\ell^\times$ & $\begin{matrix} \bar{\chi} (1 \boxplus (p - 1)) \iff \ell| (p+1) \\ \bar{\chi}(p) \iff \ell \not | (p+1) \end{matrix}$ \\  & & \\ \hline
& & \\
Principal series & $(\chi_1, \chi_2):\F_p^\times \to \bar{\Z}_\ell^\times$, $\chi_1 \ne \chi_2$ & remains irreducible mod $\ell$ $\iff \bar{\chi_1} \ne \bar{\chi_2}$; \\
& & if $\bar{\chi_1} = \bar{\chi_2}$,  then revert back to Steinberg. \\ & & \\ \hline
 & & \\
cuspidal &$\chi:\F_{p^2}^\times \to \bar{\Z}_\ell^\times$, $\chi \ne \chi^p$ & 
Always irreducible mod $\ell$! This follows since any irreducible \\ & & $\bar{\F}_\ell$-representation of $\GL_2(\F_p)$ must have a Whittaker\\
& & model, and therefore must have dimension $\geq p-1$.  \\
& &  \\\hline
\end{tabular}

\vskip5mm
\noindent $\bullet$ Reduction mod $p$.

\vskip5mm
\begin{tabular}{|l|l|l|}\hline
& & \\
1-dimensional & $\chi:\F_p^\times \to \Z_p^\times$ & $\bar{\chi}$  remains irreducible.  \\ & & \\ \hline & & \\
Steinberg & $\chi:\F_p^\times \to \Z_p^\times$ & also remains irreducible. \\ & & \\ \hline & & \\
Principal series & $(\chi_1, \chi_2):\F_p^\times \to \Z_p^\times $, $\chi_1 \ne \chi_2$ & never irreducible mod $p$; \\
& & in fact sum of $2$ irreducibles. \\ & & \\ \hline & & \\
cuspidal & $\chi:\F_{p^2}^\times \to \bar{\Z}_p^\times$, $\chi \ne \chi^p$ & irreducible mod $p$ if $\chi(x)= x^a\bar{x}^b$, with $|a-b|=1$,   \\ 
& & 
and sum of $2$ irreducibles otherwise. \\ & & \\ \hline 
\end{tabular}

\vskip8mm
\noindent The following two lemmas describe the reduction mod $p$ of a principal series as well as of cuspidal representations of $\GL_2(\F_p)$. 
Recall that the principal series representations are  parametrized by characters $\chi_1, \chi_2: \F_p^\times \to \Z_p^\times \onto \F_p^\times$, which are represented  by  a pair of integers $(i_1, i_2)$ 
where an integer $i$ denotes the map $x\rightarrow x^i$, on $\F_p^\times$.

\begin{lem} There are exact sequence,

$$ 0 \rightarrow \det^{i_1} \otimes \Sym^{i_2-i_1} V \rightarrow  \Ps(i_1, i_2) \rightarrow 
 \det^{i_2} \otimes \Sym^{(p-1) - (i_2-i_1)} V \rightarrow 0, ~~~{\rm if  }~~~~i_2 \geq  i_1  $$

$$ 0 \rightarrow \det^{i_1} \otimes \Sym^{(p-1)-(i_1-i_2)} V \rightarrow  \Ps(i_1, i_2) \rightarrow 
 \det^{i_2}\otimes  \Sym^{i_1-i_2} V \rightarrow 0, ~~~{\rm if  }~~~~i_1 \geq  i_2  .$$
Both the exact sequences are non split if $i_1 \not = i_2$.
\end{lem}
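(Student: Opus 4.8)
\medskip
\noindent\textbf{Proof proposal.} The plan is to reduce the statement to a normalised principal series, build the two-step filtration by Frobenius reciprocity, and read off the quotient by duality. First, twisting by a power of $\det$ reduces everything to the case $i_1=0$: since $\det^{i_1}\otimes\Ind_B^G(\chi_1,\chi_2)=\Ind_B^G(\det^{i_1}|_B\cdot(\chi_1,\chi_2))$, we have $\Ps(i_1,i_2)\cong\det^{i_1}\otimes\Ps(0,n)$, where $n$ is the representative of $i_2-i_1$ in $\{0,1,\dots,p-2\}$; when $i_2\ge i_1$ this is $n=i_2-i_1$ and when $i_1\ge i_2$ it is $n=(p-1)-(i_1-i_2)$. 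Unwinding the twist, and using $\det^{p-1}=1$ on $\GL_2(\F_p)$, these two possibilities yield exactly the first and the second displayed sequence respectively. So it is enough to prove, for every $n$ with $0\le n\le p-2$, that there is an exact sequence $0\to\Sym^nV\to\Ps(0,n)\to\det^n\otimes\Sym^{(p-1)-n}V\to0$, non-split whenever $n\ne0$.

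For the submodule, Frobenius reciprocity gives $\Hom_G(\Sym^nV,\Ps(0,n))\cong\Hom_B(\Sym^nV,\chi)$, where $\chi$ is the character $\mathrm{diag}(a,d)u\mapsto d^n$ of $B=TU$. In $\Sym^nV$, with $e_1,e_2$ the standard basis of $V$, the span of the monomials divisible by $e_1$ is $B$-stable and $B$ acts on the quotient line $\langle\,\overline{e_2^n}\,\rangle$ precisely through $\chi$; hence this $\Hom$-space is nonzero, and since $\Sym^nV$ is irreducible for $0\le n\le p-1$ (the classification of irreducible $\bar\F_p$-representations of $\GL_2(\F_p)$ recalled above), the resulting map $\Sym^nV\hookrightarrow\Ps(0,n)$ is injective. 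For the quotient one dualises: $\Ps(0,n)^\vee\cong\Ind_B^G((\chi_0,\chi_n)^\vee)=\Ps(0,-n)=\Ps(0,p-1-n)$, which by the same argument contains $\Sym^{(p-1)-n}V$; dualising back, and using $(\Sym^mV)^\vee\cong\det^{-m}\otimes\Sym^mV$ (legitimate here since $m=p-1-n\le p-1$, so symmetric and divided powers coincide) together with $\det^{p-1}=1$, one obtains a surjection $\Ps(0,n)\twoheadrightarrow\det^n\otimes\Sym^{(p-1)-n}V$. (Concretely, $\Ps(0,n)$ is the space of $\F_p^\times$-homogeneous functions of degree $n$ on $\F_p^2\setminus 0$, with $\Sym^nV$ realised inside it as the genuine homogeneous polynomials.)

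It remains to splice these together. The composite $\Sym^nV\hookrightarrow\Ps(0,n)\twoheadrightarrow\det^n\otimes\Sym^{(p-1)-n}V$ is a morphism between irreducible representations, hence $0$ or an isomorphism; by the uniqueness part of the classification, an isomorphism $\Sym^nV\cong\det^n\otimes\Sym^{(p-1)-n}V$ would force simultaneously $n=(p-1)-n$ and $n\equiv0\pmod{p-1}$, which is impossible. So the composite vanishes, $\Sym^nV$ lands in the kernel of the surjection, and the dimension identity $(n+1)+(p-n)=p+1=\dim\Ps(0,n)$ promotes this to the asserted exact sequence. Finally, for $n\ne0$ the sequence does not split: Mackey's formula for $B\backslash G/B=\{1,w\}$ with $B\cap wBw^{-1}=T$ gives $\Ps(0,n)|_B\cong\chi\oplus\Ind_T^B(\chi^w)$, whence $\End_G(\Ps(0,n))\cong\Hom_B(\chi,\Ps(0,n)|_B)\cong\bar\F_p\oplus\Hom_T(\chi,\chi^w)$; since $\chi^w\ne\chi$ for $n\not\equiv0\pmod{p-1}$, this endomorphism algebra is the field $\bar\F_p$, so $\Ps(0,n)$ is indecomposable and the two-term filtration is non-split.

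The point that needs the most care is the identification of the precise twist $\det^{i_2}$ on the quotient: the duality argument determines it, but one has to carry along $(\Sym^mV)^\vee\cong\det^{-m}\otimes\Sym^mV$ and the triviality of $\det^{p-1}$ on $\GL_2(\F_p)$. The borderline instance $2(i_2-i_1)=p-1$ also deserves attention: there the sub and the quotient of $\Ps(i_1,i_2)$ have the same restriction to $\SL_2(\F_p)$ and are separated only by the nontrivial quadratic character $\det^{(p-1)/2}$, which is exactly why the two $\GL_2(\F_p)$-extensions remain non-isomorphic and the sequence non-split in that case as well.
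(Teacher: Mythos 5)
Your proof is correct, but it takes a genuinely different route from the paper's. The paper asserts the composition factors by a Brauer-character computation (``Both the lemmas can be checked via Brauer characters'') and then, in Section~7, exhibits the inclusion $\det^{i_1}\otimes\Sym^{i_2-i_1}V\hookrightarrow\Ps(i_1,i_2)$ (for $i_2\ge i_1$) by restricting \emph{algebraic} functions on $\GL_2(\bar\F_p)$ lying in $\Ind_{B(\bar\F_p)}^{\GL_2(\bar\F_p)}(i_1,i_2)$ --- a dual Weyl module, irreducible in this range by Steinberg's theorem --- to abstract functions on the finite group $\GL_2(\F_p)$, and obtains the surjection by dualization. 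The paper explicitly flags that the injectivity of this restriction map is ``rather non-obvious,'' and it does not actually argue non-splitness. Your argument sidesteps both issues and stays entirely inside finite-group representation theory: you reduce to $\Ps(0,n)$ with $0\le n\le p-2$ by a $\det$-twist, produce the inclusion $\Sym^nV\hookrightarrow\Ps(0,n)$ by Frobenius reciprocity (reading off the inducing character in the $U$-coinvariants of $\Sym^nV$), dualize for the surjection exactly as the paper does, obtain exactness from the irreducibility of $\Sym^mV$ for $m\le p-1$ together with the dimension count $(n+1)+(p-n)=p+1$, and --- importantly, since the paper omits this --- prove non-splitness by computing $\End_G(\Ps(0,n))=\bar\F_p$ via the Mackey decomposition over $B\backslash G/B=\{1,w\}$ and $\chi^w\ne\chi$ for $n\ne0$. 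The paper's approach buys a conceptual explanation of why the subobject is a symmetric power (it is the restriction of a highest-weight module of the ambient algebraic group), at the cost of an unproved injectivity claim; yours is more elementary, handles both displayed sequences uniformly through the normalisation $i_1=0$, and is the only one of the two that actually establishes the non-splitting asserted in the lemma.
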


\v2

\begin{lem} For a cuspidal representation $Ds(\chi)$   corresponding to the character $\chi: x \rightarrow x^a \bar{x}^b$  from $\F_{p^2}^\times \rightarrow \F_{p^2}^\times$ 
(where $x$ is the identity map from $\F^\times_{p^2}$ to $\F^\times_{p^2}$, 
or rather its Teichm\"uller lift to the Witt ring from $\F^\times_{p^2} \to W_{\F_{p^2}}^\times$, 
and  $\bar{x} = x^p$), its reduction mod $p$, assuming without loss of generality 
$a >b$,  has Jordan-H\"older factors,
$\det^{b+1} \otimes \Sym^{a-b-2}(V)$  and $ \det^a \otimes \Sym^{p-1-(a-b)}(V)$. (Without an
explicit model over $\bar{\Z}_p$ for $DS(\chi)$, one cannot say more about 
reduction mod $p$.)
\end{lem}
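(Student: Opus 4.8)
The plan is to work entirely in the Grothendieck group of $\bar{\F}_p$-representations of $\GL_2(\F_p)$ and compare Brauer characters; since $Ds(\chi)$ is already defined over the Witt ring $W_{\F_{p^2}}$, hence over $\bar{\Z}_p$, the Brauer--Nesbitt theorem says this pins down the semisimplification of the reduction. The Brauer character $\varphi$ of $\overline{Ds(\chi)}$ is the restriction of the ordinary character $\Theta_\chi$ of $Ds(\chi)$ to the $p$-regular (equivalently semisimple) classes of $\GL_2(\F_p)$: the central classes $\alpha\cdot I$ with $\alpha\in\F_p^\times$, the split regular classes $\mathrm{diag}(\alpha,\beta)$ with $\alpha\ne\beta$ in $\F_p^\times$, and the elliptic classes $d_x$ with eigenvalues $x,x^p$ for $x\in\F_{p^2}^\times\setminus\F_p^\times$. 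Writing $\chi(y)=y^{a}\bar y^{b}=y^{a+pb}$ and using $\alpha^p=\alpha$ on $\F_p^\times$ and $x^{p^2}=x$ on $\F_{p^2}^\times$, the standard character table of $\GL_2(\F_p)$ (cuspidal row: dimension $p-1$, central character $\chi|_{\F_p^\times}$, vanishing on split regular classes, value $-(\chi(x)+\chi(x^p))$ on elliptic classes) gives $\varphi(\alpha I)=(p-1)\,\alpha^{a+b}$, $\ \varphi(\mathrm{diag}(\alpha,\beta))=0$, and $\varphi(d_x)=-(x^{a+pb}+x^{pa+b})$. I will normalize so that $0\le b<a\le p-1$, so that $1\le a-b\le p-1$ and both Young diagrams below lie in the valid range $[0,p-1]$ (with $\Sym^{-1}V=0$ allowed, which is exactly the degenerate case $a-b=1$).

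On the other side, for a $p$-regular element with Teichm\"uller-lifted eigenvalues $\lambda,\mu$, the Brauer character of $\beta_{i,j}:=\det^{\,j}\otimes\Sym^{\,i}V$ is $(\lambda\mu)^{j}(\lambda^{i}+\lambda^{i-1}\mu+\cdots+\mu^{i})=(\lambda\mu)^{j}\dfrac{\lambda^{i+1}-\mu^{i+1}}{\lambda-\mu}$, equal to $(i+1)(\lambda\mu)^{j}\lambda^{i}$ when $\lambda=\mu$ and to $0$ when $i=-1$. Setting $(i_1,j_1)=(a-b-2,\,b+1)$ and $(i_2,j_2)=(p-1-(a-b),\,a)$, the heart of the proof is the elementary identity
\[
\varphi \;=\; \beta_{i_1,j_1}+\beta_{i_2,j_2}
\]
checked class by class. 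On $\alpha I$ the right side is $\big((a-b-1)+(p-a+b)\big)\alpha^{a+b}=(p-1)\alpha^{a+b}$ after $\alpha^{p-1}=1$. On $\mathrm{diag}(\alpha,\beta)$, clearing the denominator $\alpha-\beta$ turns the two summands into $\alpha^{a}\beta^{b+1}-\alpha^{b+1}\beta^{a}$ and $\alpha^{b+1}\beta^{a}-\alpha^{a}\beta^{b+1}$ (the second using $\alpha^{p}=\alpha$, $\beta^{p}=\beta$), which cancel. On $d_x$, clearing the denominator $x-x^p$ and reducing exponents modulo $p^2-1$ (using $N(x)=x^{p+1}$), the two summands become $x^{a+p+pb}-x^{(b+1)+pa}$ and $x^{b+p+pa}-x^{(a+1)+pb}$, whose sum is exactly $-(x-x^p)(x^{a+pb}+x^{pa+b})=(x-x^p)\varphi(d_x)$. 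Since the Brauer characters of the $p(p-1)$ irreducibles $\det^{\,j}\otimes\Sym^{\,i}V$ are linearly independent as functions on the $p(p-1)$ $p$-regular classes, this identity determines the multiplicity vector of $\overline{Ds(\chi)}$ uniquely, so its Jordan--H\"older constituents are precisely $\det^{b+1}\otimes\Sym^{a-b-2}V$ and $\det^{a}\otimes\Sym^{p-1-(a-b)}V$, each with multiplicity one (these two are never isomorphic when both are nonzero); the dimension check $(a-b-1)+(p-(a-b))=p-1$ confirms consistency.

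The only delicate points are getting the exponent bookkeeping in the elliptic case right and fixing the correct normalization of the cuspidal character (its sign, its central character $\chi|_{\F_p^\times}$, and the value $-(\chi(x)+\chi(x^p))$ on elliptic classes); once these are in place the computation is purely formal. As the statement already notes, this argument recovers only the semisimplification: deciding whether the extension between the two constituents is split or not would require an explicit $\bar{\Z}_p$-lattice model of $Ds(\chi)$, which the Brauer-character method does not see.
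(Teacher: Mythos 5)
Your proof is correct and is precisely the argument the paper has in mind: the text simply says ``Both the lemmas can be checked via Brauer characters,'' and you have carried out that Brauer-character computation in full, verifying the identity on central, split-regular, and elliptic classes and invoking Brauer--Nesbitt together with linear independence of Brauer characters of the $p(p-1)$ irreducibles $\det^j\otimes\Sym^i V$.
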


Both the lemmas  can be checked via Brauer characters.

\v2

\noindent{\bf Remark :} It may be noted that for $a-b = 1$, one of the Jordan-H\"older factors of $Ds(\chi)$ is $\Sym^{-1}(V)$ which under the usual 
convention is the zero vector space.

\v2

\noindent{\bf Remark :} One general method to understand cuspidal representations $Ds(\chi)$ is via the method of {\it basechange} which allows
one to think about these representations in terms of principal series representations $\Ps(\chi, \bar{\chi})$ of $\GL_2(\F_{p^2})$ where
$\bar{\chi}(x) = \chi(\bar{x})= \chi^p$. The method of basechange is valid in the modular case too, and allows us to relate 
lemmas  4.1 and 4.2, which we briefly review. Basechange identifies irreducible 
representations of $\GL_2(\F_{p^2})$ 
which are invariant under the Galois group which then extend to a representation of $\GL_2(\F_{p^2}) \rtimes \Z/2$ 
with representations of $\GL_2(\F_p)$. The method of basechange is based on the 
Shintani character identity, according to which the character  of the extended representation  of $\GL_2(\F_{p^2}) \rtimes {\Fr}^{\Z/2}$ 
at the non-identity component $(g, \Fr)$ for $g \in \GL_2(\F_{p^2})$ is the same as that of a representation of $\GL_2(\F_p)$ at the element
$Nm(g)$ of  $\GL_2(\F_p)$ which belongs to the $\GL_2(\F_{p^2})$-conjugacy class defined by $g \cdot \Fr(g)$. Given that the Shintani 
basechange identity
holds for $\C$-representations, and therefore for $\bar{\Q}_\ell$-representations, choosing a $\GL_2(\F_{p^2}) \rtimes {\Fr}^{\Z/2}$ invariant lattice, it holds good  for mod $\ell$ representations too, including $\ell =p$. We also recall (an easy check) that the base change of an algebraic representation  $V$
of $G(\F_p)$ (i.e., an algebraic representation of $G(\bar{\F}_p)$ restricted to 
$G(\F_p)$) to $G(\F_{p^2})$, for $G$ any connected algebraic group over $\F_p$, is the representation of 
$G(\F_{p^2})$ on the space $V \otimes V^{\Fr}$ which as a vector space is the same as $V \otimes V$ on which $g \in G(\F_{p^2})$ acts as
$g(v_1 \otimes v_2) = gv_1 \otimes \Fr(g)v_2$. 

To relate Lemma 4.1 with 4.2, we will actually need a form of Lemma 4.1 for $\GL_2(\F_{p^2})$ which will actually have four  Jordan-H\"older factors,
two of which arise as basechange of the two Jordan-H\"older factors in Lemma 4.1, and two of which are Galois conjugate representations, 
and therefore do not contribute to the twisted trace. We leave details including the description of the Jordan-H\"older factors 
of a principal series representation of $\GL_2(\F_{p^2})$ in characteristic $p$ in terms of Lemma 1.3 to the reader.

\section{Generalities  on representations of $G(k)$}

We make some remarks on representations of $G(k)$ where $G$ is a reductive group defined over $k$,  a $p$-adic field. The groups $G(k)$ are  locally compact topological groups 
which are totally disconnected. They come equipped with filtration by 
compact open subgroups, such as for $G(k)= \GL_n(k)$, by the principal congruence subgroups,
$$K_m = \{g \in \GL_n(\O_k):g \equiv 1 \pmod{\varpi^m}\} .$$
\v2
A {\em smooth representation} of $G=G(k)$ is a representation of $G$ 
as an abstract group on a vector space $V$ such that for all $v \in V$ 
there exists a compact open subgroup $K_v \subseteq G$ such that 
$K_v$ operates trivially on $v$, i.e., if we consider the discrete topology on 
$V$, then this action is continuous.

\v2
A smooth representation $\pi$ of $G$ is said to be {\em admissible}  if $\pi^K = \{v \in \pi: kv = v ~\forall k \in K\}$ is finite dimensional for any compact open subgroup $K$ of $G$.
Thus for $\pi$ an admissible representation, we have
$$\pi = \bigcup_K \pi^K,$$
where the $K$'s are compact open subgroups of $G(k)$, each $\pi^K$ being finite dimensional.

The main question in the  representation theory  of $p$-adic groups is:
\begin{que}
Classify irreducible admissible representations of $G(k)$.
\end{que}

We recall the notion of parabolic induction.
We have a parabolic subgroup $P =MN \supseteq B$.
Given a representation $(\rho, V)$ of $M$, consider $\rho$ as a representation of $P$ by extending $\rho$ trivially across $N$.
Then 
$$\Ps(\rho) := \Ind_P^G \rho: = \left\{f : G \to \C: f {\sf ~locally~constant}, f(pg) = \rho(p)f(g)\right\} .$$
The group $G$ operates on this space by right translations.

\begin{thm}
$\Ind: \Rep(M) \to \Rep(G)$ takes admissible representations to admissible representations and finite length representations to finite length representations, and holds in all the cases being considered in these notes, i.e., 
for representations over $\C, \bar{\F}_\ell, \bar{\F}_p$.
\end{thm}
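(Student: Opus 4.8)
The plan is to prove the statement about parabolic induction $\Ind_P^G$ preserving admissibility and finite length, uniformly over the coefficient fields $\C$, $\bar\F_\ell$ (with $\ell \ne p$), and $\bar\F_p$. The key geometric input is the Iwasawa decomposition $G = PK$ for $K$ a maximal compact open subgroup of $G(k)$, which is coefficient-field independent; combined with smoothness, it shows that $\Ind_P^G\rho$ is ``compactly supported modulo $P$'' in the relevant sense and reduces questions about $\Ind_P^G\rho$ to questions about finite-dimensional data.

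First I would establish admissibility. For a compact open subgroup $K_0 \subseteq K$, restriction of functions to $K$ identifies $(\Ind_P^G\rho)^{K_0}$ with the space of functions $f\colon K \to V$ that are left $(P\cap K)$-equivariant (for the conjugated/modulus-twisted action) and right $K_0$-invariant; since $K/K_0$ is finite and the value at each coset representative $\kappa$ lies in the finite-dimensional space $V^{\,\kappa K_0 \kappa^{-1}\cap M}$ (using admissibility of $\rho$), this space is finite dimensional. This argument uses only the discreteness of $K/K_0$ and admissibility of $\rho$, so it runs verbatim over any of the three coefficient fields. I would remark that one should replace $\C$ by the coefficient field $R$ throughout and note that no integration or Haar-measure normalization is needed for this part --- only the abstract equivariance condition --- which is what makes the mod $p$ case go through as well.

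Next I would treat finite length. Here the clean approach is via the geometric lemma / Bernstein--Zelevinsky filtration: for parabolics $P$ and $Q$, the composite functor $r_Q^G \circ \Ind_P^G$ (Jacquet restriction after parabolic induction) carries a finite filtration whose subquotients are, up to normalization, of the form $\Ind \circ (\text{conjugation}) \circ r$ indexed by the finite set $P\backslash G / Q$ of double cosets; this is a formal consequence of the Bruhat decomposition and the ``geometric lemma,'' and its proof is by manipulating spaces of functions and taking successive quotients along the closure order of the cells --- again no complete reducibility or characteristic-zero input is used, so it is valid over $\bar\F_\ell$ and $\bar\F_p$. Taking $Q = B$ and iterating, one controls all Jacquet modules of $\Ind_P^G\rho$; finiteness of the (suitably defined) ``cuspidal support'' then yields finite length. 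Alternatively, and perhaps more self-containedly for these notes, I would argue by Noetherian induction: $\Ind_P^G\rho$ is a finitely generated $R[G]$-module when $\rho$ is finitely generated (this follows from $G = PK$ and smoothness), the category of smooth $R[G]$-modules is locally Noetherian for $G$ a $p$-adic reductive group (a theorem valid over any Noetherian coefficient ring, due in the $\bar\F_\ell$, $\bar\F_p$ setting to Vignéras), admissibility is preserved by subquotients, and an admissible finitely generated module of a $p$-adic group has finite length --- so combining ``admissible'' (just proved) with ``finitely generated'' gives finite length.

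The main obstacle is the finite-length assertion in characteristic $p$: the usual proof over $\C$ or $\bar\F_\ell$ that an admissible finitely generated smooth representation has finite length leans on structural facts (Jacquet functors being exact and ``not too lossy,'' Jacquet's subrepresentation theorem, second adjointness) whose mod $p$ analogues are subtler --- second adjointness in particular fails mod $p$, and Jacquet modules can be much smaller than expected. The honest route is therefore to invoke the mod $p$ results of Vignéras (and, for $\GL_n$, Herzig/Abe--Henniart--Herzig--Vignéras): the category of smooth $\bar\F_p[G]$-representations is locally Noetherian, admissible representations form a Serre subcategory, and an admissible Noetherian object has finite length. I would state explicitly that this is where the nontrivial external input enters, whereas the admissibility half and the double-coset filtration are elementary and uniform; and I would flag that for general $G$ the finite-length statement mod $p$ genuinely relies on these recent developments rather than on anything in these notes.
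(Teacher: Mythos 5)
Your admissibility argument is in the same spirit as the paper's: the notes simply observe that for any compact open subgroup $K_0$ of $G$ and any parabolic $P$, the double coset space $K_0\backslash G/P$ is finite, and that this yields admissibility uniformly over $\C$, $\bar{\F}_\ell$, $\bar{\F}_p$. Your version, which uses the Iwasawa decomposition $G=PK$ and explicitly computes $(\Ind_P^G\rho)^{K_0}$ coset by coset, is the same idea fleshed out, and the remark that no Haar measure or averaging enters is a good one. For the finite-length assertion, the paper gives no proof at all: it explicitly says this part is nontrivial and is established ``separately in all three cases,'' with the mod $p$ case resting on recent work (Herzig, and in its most general form Abe--Henniart--Herzig--Vign\'eras, none of which the paper reproduces). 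So in offering two concrete strategies you are going beyond the paper, and your concluding paragraph, which defers the general mod $p$ case to the literature, actually lands in the same place the paper does.

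Two points in your finite-length discussion need correction, though. First, the claim that the geometric lemma / Bernstein--Zelevinsky filtration is ``valid over $\bar{\F}_\ell$ and $\bar{\F}_p$'' because it uses no complete reducibility is wrong for $\bar{\F}_p$: the geometric lemma is a statement about the composite $r_Q^G\circ\Ind_P^G$, and the Jacquet functor $r_Q^G$ (taking $N_Q$-coinvariants) is only right exact when $\ell=p$, since $N_Q$ is a union of pro-$p$ groups and one cannot average. The filtration-by-cells argument and the identification of graded pieces both use exactness of $r_Q^G$, so this approach genuinely fails mod $p$; you in effect acknowledge this two paragraphs later (Jacquet modules ``much smaller than expected''), but the earlier sentence should be withdrawn rather than stated and then contradicted. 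Second, the attribution of local Noetherianity of the category of smooth representations over $\bar{\F}_p$ to Vign\'eras is off: Vign\'eras's Noetherianity results are for $\ell\ne p$; the mod $p$ statement is substantially deeper (Emerton for $\GL_2(\Q_p)$, and in greater generality it depends on later work and is not a soft consequence of Lazard's theorem for the maximal compact). Relatedly, ``admissible $+$ Noetherian $\Rightarrow$ finite length'' is not free: you need the descending chain condition too, which over $\bar{\F}_p$ one extracts via smooth duality on admissibles plus knowing the dual is again finitely generated -- worth a sentence rather than an implicit appeal. With these caveats made explicit, your proposal is a reasonable expansion of what the paper leaves unproved.
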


The part of the theorem about admissible representations going to admissible 
representations under parabolic induction is a simple result which works uniformly for representations over $\C, \bar{\F}_\ell, \bar{\F}_p$ and follows from the generality that 
for any compact open subgroup $K$ of $G$, and a maximal parabolic $P$,
$K\backslash G/P$ is a finite set. However, the part about finite length 
representations going to finite length representations is non-trivial, and done,  it seems,
separately in all the three cases of  $\C, \bar{\F}_\ell, \bar{\F}_p$.

\vspace{4mm}

These representations $\Ind_P^G \rho$, $P \ne G$ are called {\em principal series representations}, and give a large collection of representations of $G(k)$.
The representations of $G(k)$ that do not occur as sub-quotients of principal series 
representations are called {\em supercuspidal representations} of $G(k)$.

\v2
\noindent({\sc Harish-Chandra philosophy}) Representation theory of $G$ is split in 2 parts:
\begin{enumerate}
\item Understand sub-quotients of principal series representations;
\item understand those representations of $G$ which do not arise as sub-quotients of 
principal series representations  for any $P \ne G$.
\end{enumerate}

\begin{conj}[\sf Local Langlands correspondence (LLC) $\sim$ 1966]{\sf \em
There exists a bijective correspondence between irreducible $\C$-representations of $\GL_n(k)$ and $n$-dimensional $\C$-representations of the Weil-Deligne group $W'_k = W_k \times \SL_2(\C)$, 
where $W_k$ is the Weil group of $k$ and is a variant of the Galois group of $k$.
}\end{conj}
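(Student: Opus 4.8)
The statement is now a theorem: in the function-field case it is due to Laumon--Rapoport--Stuhler, and in the $p$-adic case to Harris--Taylor and, by a different route, to Henniart. I outline the $p$-adic argument. The plan is to split the correspondence into a \emph{uniqueness} part --- a characterization of $\pi \mapsto \sigma(\pi)$ by its functorial properties and by $\varepsilon$-factors of pairs --- and an \emph{existence} part, a global construction via the cohomology of Shimura varieties, and to reduce both to supercuspidals. For the reduction: by the Bernstein--Zelevinsky classification every irreducible admissible $\C$-representation of $\GL_n(k)$ is the unique Langlands quotient of a representation parabolically induced (via the $\Ind_P^G$ recalled above) from a twist of a discrete series, and every discrete series of $\GL_m(k)$ is the Steinberg-type ``segment'' attached to a supercuspidal of some $\GL_{m'}(k)$ with $m' \mid m$; dually, every semisimple $n$-dimensional representation of $W'_k = W_k \times \SL_2(\C)$ is a sum of pieces $\rho \otimes \Sym^{r-1}$ with $\rho$ irreducible on $W_k$. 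So it suffices to produce, for every $n$, a bijection from supercuspidals of $\GL_n(k)$ to irreducible $n$-dimensional representations of $W_k$ and extend it by compatibility with parabolic induction; matching the two classifications shows the extension is well defined and bijective.

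\emph{Uniqueness.} Pin the correspondence down by: (i) for $n=1$ it is local class field theory $k^\times \iso W_k^{\mathrm{ab}}$; (ii) $\det \sigma(\pi)$ corresponds to the central character of $\pi$; (iii) it commutes with twisting by $\chi \circ \det$ and with passage to contragredients; and (iv) for all pairs of supercuspidals $(\pi,\pi')$ the Rankin--Selberg $L$- and $\varepsilon$-factors of Jacquet--Piatetski-Shapiro--Shalika agree with the Artin--Langlands--Deligne factors of $\sigma(\pi) \otimes \sigma(\pi')$. The content of the uniqueness step is Henniart's theorem that these conditions, \emph{if} they can be met, determine the map; its proof is a local ``converse theorem'' showing that twisted $\varepsilon$-factors of pairs separate supercuspidals.

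\emph{Existence.} To build $\sigma(\pi)$, globalize $\pi$: choose a CM field $F$ with a place $w$ and $F_w \cong k$, and a cohomological cuspidal automorphic representation $\Pi$ of $\GL_n(\A_F)$ with $\Pi_w \cong \pi$ and controlled ramification elsewhere --- existence is arranged by a Poincar\'e-series construction together with the simple trace formula and solvable base change. Transfer $\Pi$ via Jacquet--Langlands to an inner form, descend to a unitary group, realize it in the $\ell$-adic intersection cohomology of a Kottwitz-type Shimura variety, and extract an $n$-dimensional $\ell$-adic representation $R_\ell(\Pi)$ of $\Gal(\overline{F}/F)$. Its restriction to a decomposition group at $w$, Frobenius-semisimplified and read as a Weil--Deligne representation, is $\sigma(\pi)$ up to an explicit Tate twist. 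Independence of the global choices and properties (i)--(iv) follow from the compatibility of global $L$-functions with the local factors together with strong multiplicity one; surjectivity onto the irreducible $n$-dimensional representations of $W_k$ follows because automorphic induction along cyclic extensions exhibits every such Galois representation through this construction. Henniart's uniqueness then identifies the resulting map with the one asserted in the conjecture.

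\emph{Main obstacle.} The crux is \emph{local--global compatibility at the ramified place $w$}: identifying $R_\ell(\Pi)|_{W_{F_w}}$ with $\pi$ itself. This is precisely Deligne's conjecture on the $\ell$-adic vanishing cycles of the Lubin--Tate tower (equivalently, of the bad reduction of the relevant Shimura variety / Rapoport--Zink space), proved by Boyer and by Harris--Taylor; all the geometry of nearby cycles on deformation spaces of $p$-divisible groups enters here. A secondary difficulty is the globalization with prescribed local component at $w$ together with the base-change and Jacquet--Langlands steps needed to bring a Shimura variety into play. Henniart's alternative proof replaces the vanishing-cycle computation by a purely representation-theoretic ``numerical local Langlands'' count, but still needs the global input for existence; in either approach the deep automorphic or geometric input at the supercuspidal level seems unavoidable.
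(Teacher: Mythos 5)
The paper does not prove this statement at all: it labels it a conjecture, notes that the $n=1$ case is local class field theory, and then simply records that the theorem was established around 2000 by Harris--Taylor [HT] and independently by Henniart [Hen]. There is no argument in the paper to compare yours against --- the item you were asked to ``prove'' is presented there as a deep theorem cited from the literature, consistent with the expository nature of the notes.

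That said, your sketch is a fair high-level summary of the actual proof in the literature, and the decomposition you describe --- reduction to supercuspidals via the Bernstein--Zelevinsky and Langlands-quotient classifications, Henniart's uniqueness via $\varepsilon$-factors of pairs and the numerical local Langlands count, Harris--Taylor's existence via globalization to a cohomological cuspidal $\Pi$ on a CM field, transfer to a unitary Shimura variety, and local--global compatibility through the vanishing cycles of the Lubin--Tate/Rapoport--Zink tower --- is the standard route. Two small points worth tightening: (i) the conjecture as phrased in the paper omits the word ``admissible'' and omits the Frobenius-semisimplicity condition on the Weil--Deligne side, both of which are needed for the bijection you are outlining; (ii) ``representation of $W_k \times \SL_2(\C)$'' should implicitly mean one that is algebraic on the $\SL_2$ factor and bounded on inertia --- without that normalization the count on the Galois side is wrong. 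Neither affects the structure of your argument, but if your sketch is to stand in place of the missing proof these hypotheses should be made explicit.
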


The Local Langlands correspondence 
is valid for $n=1$ by the
 {\em local class field theory} according to which  $\Gal(\bar{k}/k)^{ab} \cong \widehat{k}^\times ={\O}_k^\times \times \widehat{\Z} .$

\v2
The Local Langlands correspondence  was proved around 2000 by M.Harris and R.Taylor [HT], 
and independently by G.Henniart [Hen].

\v2

The representations that are not sub-quotients of any principal series are called 
{\it supercuspidal representations}. There is a weaker notion of a {\it cuspidal representation}  where one demands that the representation does not
appear as a sub-representation of a principal series representation; this is equivalent by Frobenius reciprocity to demanding that all the Jacquet modules are zero. Over $\C$,
the notion of cuspidal and supercuspidal representations are the same, but not with $\bar{\F}_\ell$ or $\bar{\F}_p$ coefficients, even for 
finite groups $G(\F_q)$.

Supercuspidal representations serve as building blocks of all representations by theorem 5.2. 
which is part of a very standard
theory for representations of $G(k)$ over $\C$, and was extended by Vigneras to  $\ell \ne p$.
In a recent work, Florian Herzig [Her] 
has  proved  an analogous theorem for $\ell = p$ too, and along the way, clarified the role of what was called a 
supersingular representation defined in terms of certain Hecke algebras by Barthel and Livne. By the work of Herzig, supersingular becomes identical to 
 supercuspidal representations but which are yet not classified. One of the important results that Herzig proves along the way are that there
are very few principal series which are reducible except the Steinberg and generalized Steinberg representations. The mod $p$ theory lacks symmetry:  $\pi_1 \times \pi_2$, even if irreducible,  need not be $\pi_2 \times \pi_1$. The simplest example to see this is for $\GL_2(\Q_p)$ and the principal series
representation $\chi_1 \times \chi_2$ for characters $\chi_i:\Q_p^\times \to \F_p^\times$
whose restriction to $\Z_p^\times$ are distinct characters which factor through 
$\bar{\chi}_i: \Z_p^\times/(1+p \Z_p) = \F_p^\times \to \F_p^\times$. From the Iwasawa decomposition, $\GL_2(\Q_p) = 
\GL_2(\Z_p) \cdot B$,  the restriction of the principal series 
$\Ps(\chi_1, \chi_2) := \chi_1 \times \chi_2$ to $\GL_2(\Z_p)$ is the induced representation from $B(\Z_p)$
to $\GL_2(\Z_p)$ from the character $(\bar{\chi}_1, \bar{\chi}_2)$. Therefore the space of
$K(1)$ fixed vectors inside the principal series $\chi_1 \times \chi_2$, where $K(1)$ 
is the principal congruence subgroup in $\GL_2(\Z_p)$ of level 1, is the same as the 
principal series representation of $\GL_2(\F_p)$ induced from the character 
$(\bar{\chi}_1, \bar{\chi}_2)$ of $B(\F_p)$. But by Lemma 3.1, we know that the two principal series
representations of principal series representation of $\GL_2(\F_p)$ induced from the characters  $(\bar{\chi}_1, \bar{\chi}_2)$, and $(\bar{\chi}_2, \bar{\chi}_1)$ are distinct. 
Therefore the principal series representations $\chi_1 \times \chi_2$ and $\chi_2 \times \chi_1$ of $\GL_2(\Q_p)$ are not isomorphic.

One of the first tools Herzig develops is the structure of certain Hecke algebras which we briefly recall.
Let $G$ be a split connected reductive group over a $p$-adic field $k$. 
For $K$ a hyperspecial maximal compact open subgroup of $G$, a $K$-weight is an irreducible finite-dimensional $\bar{\F}_p$-representation $V$ of $K$ 
(which factors through the reductive quotient of $K$).

Herzig considers the Hecke algebra $\mathcal{H}_G(V)
=\End_G(\ind_K^GV)$  
(where $\ind_K^GV$ denotes compact induction)  for  any irreducible $K$-weight $V$ 
and proves a  mod $p$ Satake isomorphism, identifying $\mathcal{H}_G(V)$ 
to an explicit commutative algebra. 
Then, for any irreducible $\bar{\F}_p$-representation $\pi$ of $G$, 
there is a $K$-weight $V$ and an algebra homomorphism $\chi:\mathcal{H}_G(V) \to \bar{\F}_p$ such that $\pi$ occurs as a quotient of
$\ind^G_KV \otimes_{\mathcal{H}_G(V)} {\bar{\F}}_p$, where $\mathcal{H}_G(V)$ operates on $\bar{\F}_p$  via $\chi$.

\v2

We close this section with a natural question which we do not know if it is answered.

\v2

\noindent{\bf Question :} Let $G$ be a reductive group over a $p$-adic field $k$,  $P=MN$ a proper 
parabolic subgroup of $G$, $\pi$ an irreducible representation of $G(k)$ over $\bar{\F}_p$,
and $\pi'$ an irreducible representation of $M(k)$ over $\bar{\F}_p$. Then by the Frobenius reciprocity,
$$\Hom_{G(k)}[\pi, \Ind_{P(k)}^{G(k)} \pi'] = \Hom_{M(k)}[\pi_N,  \pi'].$$
It follows that the Jacquet module of a principal series representation is always non-zero. Is it true that the
Jacquet module of a supercuspidal representation is always zero, or could it happen that the Jacquet module is nonzero,
but it has no irreducible quotient? This question is there only because one does not know if the Jacquet module takes a 
representation of finite length to a representation of finite length.

\section{A basic argument in characteristic $p$}
{\sf In this section, we are  in characteristic $p > 0$.

\begin{lem}
Any finite dimensional representation $V$ of a finite $p$-group $G$ over a field $F$ of characteristic $p$ has a nonzero 
fixed vector.
\end{lem}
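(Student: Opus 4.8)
The plan is to induct on the order of $G$, combining two elementary facts: a nontrivial finite $p$-group has nontrivial center, and a unipotent endomorphism of a nonzero vector space has nonzero kernel. (We may of course assume $V \ne 0$, otherwise the assertion is vacuous.)

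For the base case $|G| = 1$ any nonzero vector works. Assume now $|G| > 1$. Since $G$ is a nontrivial finite $p$-group its center $Z(G)$ is nontrivial (this is itself the usual orbit-counting argument applied to conjugation: noncentral classes have $p$-power size $> 1$, so $|Z(G)| \equiv |G| \equiv 0 \pmod p$), hence contains an element $z$ of order $p$; put $Z = \langle z\rangle \cong \Z/p\Z$.

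The key computation is that $T := \rho(z) \in \End_F(V)$ is unipotent. Working in the commutative $F$-subalgebra $F[T] \subseteq \End_F(V)$, which has characteristic $p$, the identity $(a-b)^p = a^p - b^p$ gives
$$(T - 1)^p = T^p - 1 = \rho(z^p) - 1 = \rho(e) - 1 = 0.$$
Thus $T - 1$ is nilpotent, so (as $V \ne 0$) it is not injective, and therefore $V^Z = \ker(T - 1) \ne 0$.

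Finally, $V^Z$ is a $G$-subrepresentation of $V$: for $g \in G$ and $v \in V^Z$, centrality of $z$ yields $z(gv) = (zg)v = (gz)v = g(zv) = gv$, so $gv \in V^Z$. Since $Z$ acts trivially on $V^Z$, this subspace is naturally a representation of the quotient $G/Z$, which is again a $p$-group of strictly smaller order; by the inductive hypothesis it has a nonzero $G/Z$-fixed vector, and such a vector is fixed by all of $G$. This completes the induction. I do not expect a genuine obstacle here; the one point deserving care is that the passage to $G/Z$ acting on $V^Z$ is legitimate, i.e.\ that $V^Z$ really is a subrepresentation, which is exactly where centrality of $z$ is used. (When $F$ is finite one can instead argue directly: $G$ acts on the finite set $V$, every orbit has $p$-power size, and $|V| \equiv 0 \pmod p$ forces $|V^G| \equiv 0 \pmod p$, hence $|V^G| > 1$; but the inductive argument above works verbatim over an arbitrary field of characteristic $p$, and in fact for arbitrary, not necessarily finite-dimensional, nonzero $V$.)
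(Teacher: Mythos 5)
Your proof is correct, but it takes a genuinely different route from the paper's. The paper reduces to the case $F = \F_p$ by replacing $V$ with the finite $\F_p$-span $W$ of a single $G$-orbit $\{gv\}$, and then runs the orbit-counting argument on the $p^d - 1$ nonzero elements of $W$: all orbits have $p$-power size, and $p^d - 1 \not\equiv 0 \pmod p$ forces a singleton orbit. You instead induct on $|G|$, using the nontriviality of the center of a $p$-group together with the unipotence identity $(T-1)^p = T^p - 1 = 0$ to produce a nonzero $V^Z$, then pass to $G/Z$ acting on $V^Z$. The paper's argument is shorter and more elementary once the reduction to a finite $\F_p$-module is made (and this reduction is exactly what makes the counting applicable over an arbitrary field of characteristic $p$); your argument bypasses any finiteness of the coefficient field from the start and makes the role of the center and of unipotence explicit, which is the version that generalizes most naturally (e.g.\ to the Kolchin/Lie--Kolchin circle of ideas for unipotent groups, which the paper itself invokes later for algebraic representations). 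You also correctly flag the one place that needs care, namely that $V^Z$ is $G$-stable precisely because $z$ is central; that is indeed the crux of the inductive step.
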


\begin{proof} {\sf Let $V$ be a representation space for $G$ of dimension $d$. Let $v$ be a nonzero vector in $V$, and consider the $\F_p$-span, say $W$,  of $\{gv\}$ as $g$ varies over $G$. Then $W$ is a finite dimensional vector space over $\F_p$ which is $G$-invariant. 
It suffices then to assume that $F=\F_p$, and $V=W$, in which case there are
$p^d-1$ nonzero elements in  $V$.
Now deduce a contradiction by looking at the action of $G$ on
nonzero elements  in $V$, and noting that} $ p^d - 1 \ne 0 \pmod{p}$.
\end{proof}

\begin{cor}
Any irreducible representation of $G=\GL_n(\F_q)$ 
in characteristic $p$ is a quotient of a principal series induced from a character on a Borel subgroup of $G=\GL_n(\F_q)$. 
\end{cor}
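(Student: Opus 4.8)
The plan is to use Lemma 6.1 applied to a $p$-Sylow subgroup of $\GL_n(\F_q)$, which we may take to be the unipotent radical $U(\F_q)$ of the Borel subgroup $B(\F_q)$. First I would let $(\pi, W)$ be an irreducible representation of $G = \GL_n(\F_q)$ over a field $F$ of characteristic $p$. Since $U(\F_q)$ is a finite $p$-group (its order is a power of $q$, hence a power of $p$), Lemma 6.1 guarantees that the space of $U(\F_q)$-fixed vectors $W^{U(\F_q)}$ is nonzero.

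Next I would observe that $W^{U(\F_q)}$ carries an action of the normalizer of $U(\F_q)$; in particular it is a nonzero representation of the torus $T(\F_q) = B(\F_q)/U(\F_q)$, and hence of $B(\F_q)$ with $U(\F_q)$ acting trivially. Since $T(\F_q)$ is a finite abelian group of order prime to $p$ (when $F = \bar\F_p$, or after replacing $F$ by a finite extension so that it contains enough roots of unity), its action on $W^{U(\F_q)}$ is semisimple, so $W^{U(\F_q)}$ contains a one-dimensional $T(\F_q)$-subspace on which $T(\F_q)$ acts by some character $\chi : T(\F_q) \to F^\times$. This gives a nonzero $B(\F_q)$-equivariant map $\chi \hookrightarrow W|_{B(\F_q)}$, i.e. a nonzero element of $\Hom_{B(\F_q)}(\chi, W)$. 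By Frobenius reciprocity, $\Hom_{B(\F_q)}(\chi, W) = \Hom_{G}(W, \Ind_{B(\F_q)}^{G} \chi)$ is nonzero (using that $\Ind$ here is ordinary induction for finite groups, which is both left and right adjoint to restriction).

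Finally, since $W$ is irreducible, any nonzero map $W \to \Ind_{B(\F_q)}^{G}\chi$ is injective, exhibiting $W$ as a subrepresentation — and, dually, applying the same argument to the contragredient or using that for finite groups induction from $B$ is self-dual up to twisting $\chi$, as a quotient — of the principal series $\Ind_{B(\F_q)}^{G}\chi$. To land precisely on "quotient" as stated, I would instead run the fixed-vector argument on the dual representation $W^\vee$ (also irreducible in characteristic $p$), obtain $W^\vee \hookrightarrow \Ind_{B(\F_q)}^{G}\chi$, and dualize: $\Ind_{B(\F_q)}^{G}\chi$ is isomorphic to $\bigl(\Ind_{B(\F_q)}^{G}\chi^{-1}\bigr)^\vee$ for finite groups, so $W$ is a quotient of $\Ind_{B(\F_q)}^{G}\chi^{-1}$, which is again a principal series.

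The only genuinely delicate point — and the one I expect to be the main obstacle to state cleanly rather than hard mathematically — is the semisimplicity of the $T(\F_q)$-action on $W^{U(\F_q)}$: this requires $F$ to contain enough roots of unity (equivalently that $\mathrm{char}\,F = p$ with $F$ large enough, e.g. $F = \bar\F_p$), which is harmless in our setting but must be noted; and one must check that the normalizer of $U(\F_q)$ in $G$ really does contain $T(\F_q)$ so that $W^{U(\F_q)}$ is a $B(\F_q)$-module. Both are standard: $B(\F_q) = T(\F_q) \ltimes U(\F_q)$ normalizes $U(\F_q)$, and $|T(\F_q)| = (q-1)^n$ is prime to $p$.
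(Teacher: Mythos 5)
Your overall strategy is the paper's: apply Lemma 6.1 to the $p$-group $U(\F_q)$ to get $W^{U(\F_q)} \neq 0$, find a character $\chi$ of $T(\F_q)$ inside it, and finish with Frobenius reciprocity. The problem is the form of Frobenius reciprocity you invoke. For finite groups the two adjunctions are $\Hom_G(\Ind_{B(\F_q)}^G\chi, W) \cong \Hom_{B(\F_q)}(\chi, \Res\, W)$ and $\Hom_G(W, \Ind_{B(\F_q)}^G\chi) \cong \Hom_{B(\F_q)}(\Res\, W, \chi)$; your claimed identity $\Hom_{B(\F_q)}(\chi, W) = \Hom_G(W, \Ind_{B(\F_q)}^G\chi)$ is neither of these, and it is false in this non-semisimple setting (it would identify the socle of $W|_{B(\F_q)}$ with its cosocle). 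A character $\chi$ occurring in the invariants $W^{U(\F_q)}$ is a subobject of $W|_{B(\F_q)}$, so what it controls is $\Hom_G(\Ind_{B(\F_q)}^G\chi, W)$; the space $\Hom_G(W, \Ind_{B(\F_q)}^G\chi)$ is governed instead by the coinvariants $W_{U(\F_q)}$, and the $T(\F_q)$-characters on invariants and coinvariants need not agree (for an irreducible $W$ they are essentially a highest and a lowest weight, which generally differ). So your intermediate claim $W \hookrightarrow \Ind_{B(\F_q)}^G\chi$ for that particular $\chi$ is unjustified, and the concluding dual maneuver inherits the same flaw: applying the correct adjunction to $\chi \subseteq (W^\vee)^{U(\F_q)}$ yields a surjection $\Ind_{B(\F_q)}^G\chi \twoheadrightarrow W^\vee$, and dualizing brings you back to an embedding of $W$ into a principal series, i.e.\ a subobject statement rather than the quotient statement to be proved.

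The repair is immediate and is exactly the paper's proof: extend $\chi$ trivially across $U(\F_q)$ to a character of $B(\F_q)$; the inclusion $\chi \hookrightarrow W|_{B(\F_q)}$ and the adjunction $\Hom_{B(\F_q)}(\chi, \Res\, W) \cong \Hom_G(\Ind_{B(\F_q)}^G\chi, W)$ give a nonzero $G$-map $\Ind_{B(\F_q)}^G\chi \to W$, which is surjective because $W$ is irreducible; hence $W$ is a quotient of a principal series, with no dualization needed. Your worry about semisimplicity of the $T(\F_q)$-action is also superfluous: one only needs that the nonzero finite-dimensional $T(\F_q)$-module $W^{U(\F_q)}$ contains an irreducible $T(\F_q)$-submodule, which is a character since $T(\F_q)$ is abelian and the coefficient field may be taken to be $\bar{\F}_p$.
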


\begin{proof}
{\sf $B = T \cdot U$ where $U$ is a $p$-group.
Look at $\pi^U \ne 0$, the Jacquet functor, as a $T$ module, thus contains an irreducible character $\chi$ of $T$.
Clearly $\pi$ is a quotient of $\Ind_B^G \chi$.}
\end{proof}
}

\begin{cor}Any representation $\pi$ over $\bar{\F}_p$ of $\GL_n(k)$,  $k$ a $p$-adic field, has a vector fixed
by the principal congruence subgroup $K(1)$ of $\GL_n(\O_k)$, and therefore contains 
an irreducible  representation of $\GL_n(\O_k)$ which factors through 
$\GL_n(\O_k/\varpi_k)$ as a submodule. 
(Such representations of $\GL_n(\O_k/\varpi_k)$ are called Serre weights of $\pi$.)
\end{cor}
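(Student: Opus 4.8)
\emph{Proof proposal.} The plan is to reduce the statement to the finite-group assertion already proved in this section. Throughout I take $\pi$ to be nonzero and smooth, as is standard for representations of $p$-adic groups. First I would fix a nonzero vector $v \in \pi$; by smoothness there is an integer $m \ge 1$ such that $v$ is fixed by the principal congruence subgroup $K(m) = \{g \in \GL_n(\O_k) : g \equiv 1 \pmod{\varpi_k^m}\}$, since the $K(m)$ form a neighbourhood basis of the identity in $\GL_n(\O_k)$. The structural facts to exploit are that $K(1)/K(m)$ is a finite $p$-group (it is a successive extension of copies of the additive group of the residue field) and that $K(m)$ is normal in $\GL_n(\O_k)$, hence in $K(1)$.

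Next I would consider $W$, the $\bar{\F}_p$-span of the orbit $\{ g v : g \in K(1)\}$. Because $K(m)$ is normal in $K(1)$ and fixes $v$, for $g \in K(1)$ and $h \in K(m)$ we have $h(gv) = g(g^{-1}hg)v = gv$, so $K(m)$ acts trivially on $W$. Thus $W$ is a representation of the finite $p$-group $K(1)/K(m)$, and in particular $W$ is finite dimensional, being spanned by the finitely many vectors $gv$ indexed by $g \in K(1)/K(m)$. Applying the basic argument of this section (a finite-dimensional representation of a finite $p$-group over a field of characteristic $p$ has a nonzero fixed vector) to $W$ as a $K(1)/K(m)$-module gives $W^{K(1)} \ne 0$, hence $\pi^{K(1)} \ne 0$.

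Finally, since $K(1)$ is normal in $\GL_n(\O_k)$, the subspace $\pi^{K(1)}$ is a nonzero $\GL_n(\O_k)$-subrepresentation of $\pi|_{\GL_n(\O_k)}$ on which $K(1)$ acts trivially, i.e. a nonzero $\bar{\F}_p$-representation of the finite group $\GL_n(\O_k/\varpi_k)$. Any nonzero module over the group algebra $\bar{\F}_p[\GL_n(\O_k/\varpi_k)]$ contains a nonzero finite-dimensional submodule (the span of a single group-orbit), and a nonzero finite-dimensional module over a finite group algebra has an irreducible submodule; inflating this back to $\GL_n(\O_k)$ produces the asserted irreducible subrepresentation of $\pi|_{\GL_n(\O_k)}$ factoring through $\GL_n(\O_k/\varpi_k)$ — a Serre weight of $\pi$.

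The only genuine content — the step I would flag as the main point rather than an obstacle — is the second one: arranging that the $K(1)$-orbit of $v$ is finite and that the $K(1)$-action on $W$ factors through a finite $p$-quotient, which is precisely what allows the finite-group lemma to be applied to the compact (but infinite) pro-$p$ group $K(1)$. The remaining ingredients — the neighbourhood-basis property of the $K(m)$, normality of congruence subgroups, and the existence of irreducible submodules in modules over a finite group algebra — are routine.
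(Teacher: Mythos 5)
Your proof is correct and is precisely the argument the paper intends but leaves implicit (no proof environment is given for this corollary because it is a direct consequence of Lemma~6.1). You correctly identify the two routine structural facts — that smoothness furnishes a $K(m)$-fixed vector, and that $K(1)/K(m)$ is a finite $p$-group — and then apply the section's basic lemma to the finite-dimensional $K(1)/K(m)$-module spanned by the orbit of that vector; the passage to a Serre weight via the normality of $K(1)$ and the existence of irreducible submodules in modules over a finite group algebra is likewise the standard and intended conclusion.
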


\begin{cor} If $V$ is a finite dimensional irreducible representation of 
$K= \GL_n(\O_k)$,
then the compact induction $\pi = {\rm ind}_{K\cdot k^\times}^{\GL_n(k)} V$ 
is of infinite length.
\end{cor}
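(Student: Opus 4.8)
The plan is to argue by contradiction: suppose $\pi = \ind_{K\cdot k^\times}^{\GL_n(k)} V$ had finite length. The first step is to extract the structural information that forces infinitely many ``pieces'' inside $\pi$. By Frobenius reciprocity for compact induction (Frobenius reciprocity in the form $\Hom_G(\ind_{K k^\times}^G V, \rho) = \Hom_{K k^\times}(V, \rho)$ for $\rho$ smooth, or its dual version $\Hom_G(\rho, \ind_{K k^\times}^G V)$ when $\rho$ is compactly induced and $V$ is admissible), the key players are the quotients of $\pi$: any irreducible quotient $\rho$ of $\pi$ is an irreducible $\bar\F_p$-representation of $\GL_n(k)$ with $\rho$ containing $V$ as a $K$-subrepresentation, i.e.\ $V$ is a Serre weight of $\rho$ (cf.\ the second Corollary in this section). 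So I would first recast the problem as: produce infinitely many pairwise non-isomorphic irreducible quotients of $\pi$, or more directly a strictly infinite filtration.

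The cleanest route is via the Hecke algebra $\mathcal H_G(V) = \End_G(\ind_K^G V)$ described in the excerpt (Herzig's mod $p$ Satake isomorphism). Up to the central character twist by $k^\times$, $\pi$ is a quotient of $\ind_K^G V$ and carries an action of a large commutative algebra $\mathcal H_G(V)$, which by the mod $p$ Satake isomorphism is a polynomial-type algebra (explicitly, for $\GL_n$ it is $\bar\F_p[T_1,\dots,T_{n-1}, T_n^{\pm 1}]$ with $T_n$ acting as a scalar once we fix the central character). Concretely: $\ind_K^G V$ is a free module over $\mathcal H_G(V)$ of rank equal to the cardinality of a set of double coset representatives, hence is not of finite length as a $G$-representation precisely because $\mathcal H_G(V)$ is infinite-dimensional over $\bar\F_p$ — and $\End_G$ of a finite-length representation over an algebraically closed field is finite-dimensional (Fitting/Jordan–Hölder: the endomorphism ring of a finite-length module is a finite-dimensional algebra over the base field when $\bar\F_p$ is algebraically closed and all composition factors are absolutely irreducible with countable dimension... one must be a little careful here since smooth irreducibles are infinite-dimensional, so I would instead argue on the module side). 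So the precise contradiction is: $\mathcal H_G(V)$ is an infinite-dimensional commutative $\bar\F_p$-algebra acting faithfully on $\pi$ (or on $\ind_K^G V$), which is impossible for a representation of finite length, since a finite-length $\bar\F_p[G]$-module has an endomorphism ring that is finite-dimensional over $\bar\F_p$ — and the $\mathcal H_G(V)$-action embeds into that endomorphism ring. (To descend from $\ind_K^G V$ to $\ind_{Kk^\times}^G V$, note that the latter is a direct summand, or rather a quotient, of the former after fixing the action of $\varpi_k$, and the relevant Hecke algebra becomes $\mathcal H_G(V)/(T_n - \lambda)$ which is still infinite-dimensional for $n \ge 2$; for $n = 1$ the statement is vacuous or false, so implicitly $n \ge 2$.)

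Let me lay out the steps in order. (1) Fix a central character by which $\varpi_k$ acts on $V$; then $\ind_{Kk^\times}^G V$ is a quotient of $\ind_K^G V$ by $(T_n - \lambda)$, so it carries a faithful action of $\mathcal R := \mathcal H_G(V)/(T_n-\lambda) \cong \bar\F_p[T_1,\dots,T_{n-1}]$, which is infinite-dimensional over $\bar\F_p$ for $n\ge 2$. (2) Verify $\ind_{Kk^\times}^G V$ is actually faithful as an $\mathcal R$-module — this follows because $\ind_K^G V$ is free over $\mathcal H_G(V)$ (a feature of the mod $p$ Satake picture / the fact that $K\backslash G/K$ maps to a monoid with no relations beyond the obvious ones), or alternatively because the natural map $\mathcal R \to \End_G(\ind_{Kk^\times}^G V)$ is injective since evaluating a Hecke operator at the ``base point'' function $[1, v]$ already detects it. (3) Observe that if a smooth $\bar\F_p[G]$-module $M$ has finite length, then $\End_G(M)$ is a finite-dimensional $\bar\F_p$-algebra: indeed $M$ has a composition series with finitely many irreducible factors, each irreducible smooth $\bar\F_p$-representation of $G$ has endomorphism ring $\bar\F_p$ by a version of Schur's lemma (here one uses that $G$ has countable dimension at infinity and $\bar\F_p$ is algebraically closed — the standard Schur lemma for smooth representations of $p$-adic groups over large enough fields), and then $\End_G(M)$ is built from finitely many $\Hom$-spaces between composition factors, each finite-dimensional (bounded by the number of times one factor appears in the other's socle/cosocle filtration — finite by finiteness of length). (4) Combine (2) and (3): a faithful infinite-dimensional commutative algebra cannot embed into a finite-dimensional one, contradiction; hence $\pi$ has infinite length.

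The main obstacle I anticipate is step (3), specifically getting a clean Schur's lemma and the finiteness of $\Hom$-spaces between composition factors in the smooth mod $p$ setting — the subtlety being that smooth irreducible mod $p$ representations of $\GL_n(k)$ are infinite-dimensional and their endomorphism rings being $\bar\F_p$ is a genuine (if known) theorem, not formal. A slicker way to dodge this: argue directly that $\ind_{Kk^\times}^G V$ surjects onto infinitely many pairwise non-isomorphic irreducible representations. For each $\bar\F_p$-point $\chi$ of $\operatorname{Spec} \mathcal R = \A^{n-1}$, the module $\ind_{Kk^\times}^G V \otimes_{\mathcal R, \chi} \bar\F_p$ is nonzero (by the explicit description of the tensor product and nonvanishing, e.g.\ via the universal module being faithfully nonzero after base change — this is where one cites Herzig's analysis), hence has an irreducible quotient $\rho_\chi$; and $\rho_\chi$ determines $\chi$ because $\mathcal R$ acts on $\rho_\chi$ through $\chi$ and the $\mathcal R$-action on an irreducible is determined by $\rho_\chi$ up to the ambiguity that two characters give the same quotient — but distinct $\chi$ of the form landing in distinct orbits (or even just: there are infinitely many $\chi$, only finitely many can give the same finite set of irreducible subquotients if $\pi$ had finite length) gives infinitely many distinct $\rho_\chi$. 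This version trades the Schur-lemma input for a nonvanishing-after-specialization input, which is the other place the argument could stall; I would present whichever the reader finds cleaner, citing [Her] for the ``universal module is nonzero after arbitrary central specialization'' fact, which is really the heart of the matter.
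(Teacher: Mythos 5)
Your core strategy coincides with the paper's: both proofs reduce the claim to showing that $\End_G(\pi)$ is infinite-dimensional. The difference is in how that is established. The paper argues elementarily: by Frobenius reciprocity $\End_G(\pi) = \Hom_{K\cdot k^\times}[V, \pi]$, and then the Cartan decomposition $\GL_n(k) = KAK$ plus Mackey theory decomposes $\pi|_K$ as a direct sum over $\underline{m}$ of $\operatorname{ind}_{K_{\underline m}}^K V^{\underline m}$, each piece contributing a nonzero intertwiner because $V$ factors through $\GL_n(\F_q)$. You instead invoke Herzig's mod $p$ Satake isomorphism to identify $\End_G(\operatorname{ind}_K^G V)$ with a polynomial algebra and then deduce infinite-dimensionality of the central-character quotient. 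These are the same calculation repackaged (the Satake isomorphism \emph{is} the systematic form of the Mackey/Cartan analysis), so you haven't really dodged anything — you've cited the theorem that records the paper's computation. What you have done, and the paper has not, is flag the genuine subtlety in the final step: passing from ``$\End_G(\pi)$ infinite-dimensional'' to ``$\pi$ of infinite length'' needs a mod-$p$ Schur-type lemma (or at least finiteness of $\End$ for finite-length smooth modules), which is not formal over the countable algebraically closed field $\bar{\F}_p$ without an admissibility input; the paper silently assumes this. Your second route — producing infinitely many pairwise non-isomorphic irreducible quotients by specializing the Hecke action at distinct characters of $\mathcal{H}_G(V)$ — is a genuinely different (and in some ways cleaner, since it sidesteps Schur) way to conclude, at the cost of citing nonvanishing of the universal module after specialization. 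Either way, the substance matches the paper; the paper's version is more self-contained since it exhibits the intertwiners directly rather than quoting the structure theory of the Hecke algebra.
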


\begin{proof} {\sf We prove that the space of intertwining operators, $\End_G(\pi) = 
\Hom_G[ {\rm ind}_{K\cdot k^\times}^{\GL_n(k)} V, {\rm ind}_{K\cdot k^\times}^{\GL_n(k)} V]$ 
is infinite dimensional. By Frobenius reciprocity,
$$\Hom_G[ {\rm ind}_{K\cdot k^\times}^{\GL_n(k)} V, 
{\rm ind}_{K\cdot k^\times}^{\GL_n(k)} V] = \Hom_{K\cdot k^\times}[V, {\rm ind}_{K\cdot k^\times}^{\GL_n(k)} V].$$ 

We now recall the Cartan decomposition according to which $\GL_n(k) = KAK$, where $A$ 
is the semigroup of diagonal matrices of the form $\varpi_{\underline{m}}= (\varpi^{m_1},\varpi^{m_2},\cdots, \varpi^{m_n})$ with $m_1 \geq m_2 \geq \cdots \geq m_n$.

From the Cartan decomposition, it follows that the restriction of 
$\pi = {\rm ind}_{K\cdot k^\times}^{\GL_n(k)} V$ to $K$ is a direct sum of induced representations, ${\rm ind}_{K_{\underline{m}}}^K V^{\underline{m}}$ where $K_{\underline{m}} = K \cap \varpi_{\underline{m}} K \varpi^{-1}_{\underline{m}} $, and $V^{\underline{m}}$ is the representation of 
 $K_{\underline{m}}$ through the map,  $K_{\underline{m}} \to K $ given by 
$x \to \varpi_{\underline{m}}^{-1}\cdot x \cdot \varpi_{\underline{m}} $. Since the representation $V$ of $K$ factors through $\GL_n(\F_q)$, this allows one to get infinitely many intertwining operators, as is easily checked. } 
\end{proof}
 
\section{Highest weight modules}
{\sf For finite groups of Lie type $G(\F_p)$, such as $\GL_n(\F_p)$, in the case $\ell = p$,  methods of algebraic groups/algebraic geometry 
can be brought to bear on the problem of constructing or understanding 
representations of $G(\F_p)$.

Let $G = \GL_n(\bar{\F}_p)$ and let $\pi$ be an algebraic representation of $G$.
Let $U$ be the group of upper triangular unipotent matrices and let $B=T\cdot U$ be the group of upper triangular matrices.

\begin{defn}{\sf
A  representation $\pi$ is  said to be a {\em highest weight module} with weight $\underline{\lambda}:= \lambda_1 \geq \cdots \geq \lambda_n$ 
if there exists $v \in \pi^U$ such that the maximal torus $T$ in $B$ operates by $\underline{t}v = \underline{t}^{\underline{\lambda}} v$
where $\underline{t}^{\underline{\lambda}} = t_1^{\lambda_1} \cdots t_n^{\lambda_n}$.
We further demand that $\pi$ is generated as a $G$-module by this vector.
}\end{defn}

\begin{thm}{\sf \em 
Any irreducible representation $\pi$  of $\GL_n(\bar{\F}_p)$  
is a highest weight module for a unique weight $\underline{\lambda}(\pi):= \lambda_1 \geq \cdots \geq \lambda_n$, and the association $\pi \rightarrow \underline{\lambda}(\pi)$ gives a bijective correspondence between irreducible representations of $\GL_n(\bar{\F}_p)$, and highest weights   
$\underline{\lambda}:= \lambda_1 \geq \cdots \geq \lambda_n$.
}\end{thm}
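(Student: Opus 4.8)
The plan is to establish the theorem in two halves: first, that every irreducible algebraic representation $\pi$ of $\GL_n(\bar\F_p)$ is a highest weight module for some dominant weight $\underline\lambda$, and second, that the resulting map $\pi \mapsto \underline\lambda(\pi)$ is a bijection onto the set of dominant weights $\lambda_1 \geq \cdots \geq \lambda_n$. For the existence of a highest weight vector, I would apply the basic argument of Section 6: the unipotent radical $U$ of $B$ is a connected unipotent group, and on the (possibly infinite-dimensional, but locally finite) space $\pi$ the invariants $\pi^U$ are nonzero. The cleanest route is to observe that any finite-dimensional $U$-stable subspace of $\pi$ — which exists since $\pi$ is a union of such, being algebraic — has a nonzero $U$-fixed vector; in characteristic $p$ this follows from Lemma 6.1 applied to finite $p$-subgroups together with a Zariski-density/limit argument, or more directly from the Lie–Kolchin theorem for the connected solvable group $B$ acting on that finite-dimensional subspace, which produces a common eigenvector for $B$, hence a $U$-fixed $T$-eigenvector. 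The torus $T$ acts on the one-dimensional span of such a vector by a character $\underline t^{\underline\lambda}$, giving a weight $\underline\lambda$; since $\pi$ is irreducible it is generated by this vector, so $\pi$ is a highest weight module with weight $\underline\lambda$.

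Next I would pin down that $\underline\lambda$ is dominant and unique. For dominance $\lambda_1 \geq \cdots \geq \lambda_n$: among all $T$-weights occurring in $\pi$, pick $\underline\lambda$ to be a maximal one with respect to the dominance order (weights differing by a sum of positive roots $e_i - e_j$, $i<j$); a nonzero weight vector of maximal weight is automatically killed by all positive root subgroups, hence lies in $\pi^U$, and maximality in the standard order forces $\lambda_i \geq \lambda_{i+1}$ for all $i$ — otherwise applying the root subgroup $U_{-(e_i-e_{i+1})}$ would be needed, but one shows instead, via $\mathfrak{sl}_2$-theory for each simple root, that a non-dominant weight cannot be the weight of a $U$-highest weight vector generating an irreducible module (the $\mathfrak{sl}_2$-subrepresentation generated by the vector would be infinite-dimensional or force a contradiction). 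For uniqueness: if $v, v'$ are two highest weight vectors of weights $\underline\lambda, \underline\mu$ each generating $\pi$, then $v'$ lies in the $G$-span of $v$, and by standard weight considerations (the weights of $U^- \cdot v$ all lie below $\underline\lambda$) we get $\underline\mu \leq \underline\lambda$ and symmetrically $\underline\lambda \leq \underline\mu$, so $\underline\lambda = \underline\mu$; moreover the $\underline\lambda$-weight space is one-dimensional, so the highest weight line is canonical.

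For surjectivity onto dominant weights, I would construct, for each dominant $\underline\lambda$, an irreducible module with that highest weight: form the dual Weyl (induced) module $H^0(\underline\lambda) = \Ind_B^{\GL_n} \bar\F_p(\underline\lambda)$ — nonzero precisely because $\underline\lambda$ is dominant, by Kempf vanishing or the explicit identification with global sections of a line bundle on the flag variety — which has a unique irreducible submodule $L(\underline\lambda)$, and this submodule has highest weight $\underline\lambda$. (Alternatively, reduce mod $p$ the irreducible characteristic-zero module of highest weight $\underline\lambda$ and take the socle, or the unique irreducible quotient of the Weyl module $V(\underline\lambda)$.) Injectivity of $\pi \mapsto \underline\lambda(\pi)$ then follows from uniqueness of the highest weight together with the fact that $\pi$ is generated by its highest weight line, so two irreducibles with the same highest weight are each the irreducible subquotient of $H^0(\underline\lambda)$ containing the $\underline\lambda$-weight space, hence isomorphic.

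The main obstacle is the surjectivity/construction step: producing, for every dominant $\underline\lambda$, a genuinely irreducible algebraic representation with highest weight exactly $\underline\lambda$, and knowing it is irreducible rather than merely indecomposable. This is where one genuinely invokes the machinery of reductive algebraic groups in positive characteristic — the flag variety $G/B$, line bundles $\mathcal L_{\underline\lambda}$, Kempf's vanishing theorem giving $H^i(G/B, \mathcal L_{\underline\lambda}) = 0$ for $i>0$ and $H^0 \neq 0$ when $\underline\lambda$ is dominant, and the fact that $H^0(\underline\lambda)$ has simple socle $L(\underline\lambda)$. Establishing all this from scratch is the content of a substantial chunk of Jantzen's book; in an expository account I would cite it. The existence half (every irreducible is a highest weight module) is by contrast nearly formal once one has the Lie–Kolchin theorem and the observation that $\pi^U \neq 0$.
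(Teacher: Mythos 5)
Your proposal is correct and follows essentially the same route as the paper: existence of a highest weight via $U$-fixed vectors (the basic fixed-point argument of Section 6), construction of the irreducible $L_{\lambda}$ as the unique irreducible submodule of the induced module $\Ind_B^G\underline{\lambda}$ (Theorem 7.3, proved there via the open Bruhat cell), and identification of any irreducible with a given highest weight with that submodule, with the deeper facts (nonvanishing of $\Ind_B^G\underline{\lambda}$ for dominant weights) deferred to the standard literature in both accounts. The only point the paper makes explicit that you leave implicit is how an abstract irreducible of highest weight $\underline{\lambda}$ gets realized inside the induced module at all: the paper's Lemma 7.4 produces the required nonzero map $\pi \to \Ind_B^G\omega_0(\underline{\lambda})$ by Frobenius reciprocity, pairing against a highest weight vector of the dual $\pi^\vee$, whereas your injectivity step ("each is the subquotient of $H^0(\underline{\lambda})$ containing the $\underline{\lambda}$-weight space") tacitly assumes this realization.
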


Given a character $\underline{\lambda}:= (\lambda_1, \cdots, \lambda_n) : (t_1,\cdots, t_n) \to t_1^{\lambda_1} \cdots t_n^{\lambda_n}$, 
define the corresponding character on $B$ which is trivial on $U$ and consider 
$$\Ind_B^G \underline{\lambda} 
:= \left\{f:G \to \bar{\F}_p: f(bg) = \underline{\lambda}(b) f(g) \right\} ;$$
the functions $f: G \rightarrow \bar{\F}_p$ here are algebraic functions on $G$.  The group $G$ operates on such functions by right translation.

\begin{thm}{\sf 
\begin{enumerate}
\item For 
a character $\underline{\lambda}:= (\lambda_1, \cdots, \lambda_n) 
: (t_1,\cdots, t_n) \to t_1^{\lambda_1} \cdots t_n^{\lambda_n}$ of $T$,
$\Ind_B^G \underline{\lambda}$ is always a  finite dimensional vector space over $k$, and is nonzero if and only if $\omega_0(\underline{\lambda})$ 
is dominant integral where $\omega_0$ is the unique element in the Weyl group  $W= N(T)/T$ which takes all the positive roots 
of $T$ in $B$ to negative roots. (For $\GL_n$, $\omega_0(\underline{\lambda}) = (\lambda_n, \cdots, \lambda_1) $.) 
\item $\Ind_B^G \underline{\lambda}$ has a unique $U$-fixed vector.
\item $\Ind_B^G\underline{\lambda}$ has a unique irreducible submodule, denoted by $L_{\lambda}$.
\end{enumerate}
}\end{thm}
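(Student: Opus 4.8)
The plan is to realize $\Ind_B^G\underline\lambda$ geometrically and then read everything off the Bruhat decomposition of the flag variety together with the ``basic argument in characteristic $p$'' (Lemma 6.1). Write $X = B\backslash G$, where $G = \GL_n(\bar\F_p)$; the defining condition $f(bg) = \underline\lambda(b)f(g)$ identifies $\Ind_B^G\underline\lambda$ with the space $H^0(X,\L_{\underline\lambda})$ of global regular sections of the $G$-equivariant line bundle $\L_{\underline\lambda}$ on $X$ attached to the character $\underline\lambda$ of $B$, with $G$ acting by right translation. Since $X$ is a projective variety this space is finite dimensional, which is the first half of (1). We will use that $X$ is irreducible and, by Bruhat, equals the disjoint union of the cells $C_w = B\backslash BwB$ over $w$ in the Weyl group $W = N(T)/T$, the cell $C_{w_0}$ of the longest element $w_0$ being open and dense; moreover $B\dot w_0 B = B\dot w_0 U$, so $u \mapsto B\dot w_0 u$ identifies $U$, acting on the right, with $C_{w_0}$. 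For the nonvanishing criterion, restriction of sections to the dense open $C_{w_0}$ is injective by irreducibility, so the question is which left $T$-weights can occur there; for $\GL_n$ this is made completely explicit by writing the sections down as monomials $\prod_{i=1}^{n-1}\Delta_i(g)^{a_i}\cdot\det(g)^{a_n}$, where $\Delta_i(g)$ is the minor of $g$ on rows $\{n-i+1,\dots,n\}$ and columns $\{1,\dots,i\}$, with $a_i \geq 0$ for $i<n$ and $a_n \in \Z$. A direct computation gives $\Delta_i(bg) = b_{n-i+1,n-i+1}\cdots b_{nn}\,\Delta_i(g)$ and $\Delta_i(gu) = \Delta_i(g)$ for $b \in B$, $u \in U$, so such a monomial lies in $\Ind_B^G\underline\lambda$ exactly for the character $\mathrm{diag}(t_1,\dots,t_n)\mapsto\prod_i(t_{n-i+1}\cdots t_n)^{a_i}$; matching these characters with $\underline\lambda$ one sees that a nonzero section exists precisely when $\omega_0(\underline\lambda)$ is dominant integral, and a weight count on $C_{w_0}$ shows there is none otherwise. (For a general split reductive group this explicit construction is replaced by Kempf's vanishing theorem, or the Chevalley--Borel--Weil construction of the canonical section.)

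\emph{Part (2).} If $f$ is fixed by right translation by $U$ then $f(\dot w_0 u) = f(\dot w_0)$ for all $u \in U$, so by $f(b\dot w_0 u) = \underline\lambda(b)f(\dot w_0)$ the section $f$ is determined on the dense open set $B\dot w_0 B$ by the single scalar $f(\dot w_0)$, hence, being regular, on all of $X$; thus $\dim_k(\Ind_B^G\underline\lambda)^U \leq 1$. When $\Ind_B^G\underline\lambda$ is nonzero it is a nonzero finite-dimensional rational representation of the connected unipotent group $U$, so it has a nonzero $U$-fixed vector --- the unipotent analogue of Lemma 6.1, deducible from Lemma 6.1 itself by applying the latter to the finite $p$-subgroups $U(\F_{p^r})$ and letting $r\to\infty$ along a divisibility-cofinal chain --- so $(\Ind_B^G\underline\lambda)^U$ is exactly one dimensional; fix a generator $f_0$.

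\emph{Part (3).} Let $M \subseteq \Ind_B^G\underline\lambda$ be a nonzero $G$-submodule. As in part (2), $M$ is a nonzero finite-dimensional rational representation of $U$, so $M^U \neq 0$; since $M^U \subseteq (\Ind_B^G\underline\lambda)^U = k f_0$ we get $f_0 \in M$. Hence every nonzero $G$-submodule of $\Ind_B^G\underline\lambda$ contains $f_0$, so the $G$-submodule $L_\lambda$ generated by $f_0$ is contained in every nonzero submodule; in particular every nonzero submodule of $L_\lambda$ equals $L_\lambda$, so $L_\lambda$ is irreducible and is the unique irreducible (indeed the unique minimal nonzero) submodule of $\Ind_B^G\underline\lambda$.

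\emph{Main obstacle.} Once projectivity of $X$ (used only for finite dimensionality) and the dominance criterion in (1) are granted, parts (2) and (3) follow formally from the Bruhat decomposition and the unipotent fixed-vector lemma, and in fact hold verbatim for any split reductive group over any field. The step that is not formal is therefore the nonvanishing criterion: for $\GL_n$ it comes down to the bookkeeping with the minors $\Delta_i$ sketched above, but in general it is Kempf's vanishing theorem, the substantive geometric input to the whole theory, and I would expect that to be the main obstacle in a self-contained treatment.
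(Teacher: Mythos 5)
For parts (2) and (3) your argument is essentially the paper's: existence of a $U$-fixed vector from the fixed-point theorem for unipotent groups in characteristic $p$, uniqueness from density of the big Bruhat cell (the paper phrases this as a function in $\Ind_B^G\underline\lambda$ being determined by its restriction to the open dense set $U^-BU$, yours as determination by the single value $f(\dot w_0)$ on $B\dot w_0 U$ --- the same point), and then (3) by noting that any nonzero submodule has a nonzero $U$-fixed vector, hence contains $f_0$. The one substantive difference is that the paper explicitly declines to prove part (1) (``We only prove parts (2) and (3).''), so your treatment of (1) --- finite dimensionality from projectivity of $B\backslash G$, and the nonvanishing criterion for $\GL_n$ via the minors $\Delta_i$ with the general case deferred to Kempf vanishing --- supplies an argument where the paper gives none; the sketch is correct in outline, though the claim that ``a weight count on $C_{w_0}$ shows there is none otherwise'' is exactly the part that is nontrivial in general and would need to be fleshed out (it is genuinely Kempf-level input, as you say). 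Your observation that the unipotent fixed-point theorem follows from Lemma~6.1 by passing to the union of the finite $p$-groups $U(\F_{p^r})$ and using that the fixed subspaces form a descending chain in a finite-dimensional space is a pleasant extra the paper takes for granted.
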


\begin{proof}{\sf 
We only prove parts (2) and (3).
It is well-known that unipotent groups always have a fixed point.
For uniqueness it suffices to prove that $\pi^{U^-}$ is one-dimensional where $U^-$ is the group of lower triangular matrices.
But $U^-B$ is an open dense set (known as the open Bruhat cell) so a function on $G$ is determined by its restriction to $U^-BU$.
This proves part (2).

\v2
Part (3) follows from part (2) since a unipotent group over a field $k$ always has a fixed vector in any algebraic representation over $k$.
}\end{proof}

\begin{lem}{\sf 
Any irreducible highest weight module of weight $\underline{\lambda}$ is the one which appears in part (3) of the above theorem.
}\end{lem}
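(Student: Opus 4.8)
The plan is to show that any irreducible highest weight module $\pi$ of weight $\underline{\lambda}$ embeds into $\Ind_B^G \underline{\lambda}$, so that by part (3) of the preceding theorem it must coincide with $L_{\underline{\lambda}}$, the unique irreducible submodule. First I would recall the setup: $\pi$ contains a vector $v \in \pi^U$ on which $T$ acts by $\underline{\lambda}$, and $\pi$ is generated by $v$ as a $G$-module. Since $v$ is fixed by $U$ and transforms by $\underline{\lambda}$ under $B=T\cdot U$, it spans a $B$-subrepresentation of $\pi$ isomorphic to the character $\underline{\lambda}$. By Frobenius reciprocity for algebraic induction,
$$\Hom_G(\pi, \Ind_B^G \underline{\lambda}) = \Hom_B(\pi|_B, \underline{\lambda}),$$
and projecting onto the line spanned by $v$ (using that $v$ generates a $B$-stable line with the correct character) gives a nonzero element of the right-hand side, hence a nonzero $G$-map $\phi:\pi \to \Ind_B^G\underline{\lambda}$.

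The next step is to observe that $\phi$ is injective: $\pi$ is irreducible, so $\ker\phi$ is either $0$ or all of $\pi$; it cannot be all of $\pi$ because $\phi$ is nonzero. Therefore $\phi$ realizes $\pi$ as a nonzero $G$-submodule of $\Ind_B^G\underline{\lambda}$. By part (3) of Theorem, $\Ind_B^G\underline{\lambda}$ has a \emph{unique} irreducible submodule $L_{\underline{\lambda}}$; since $\pi$ is irreducible, its image under $\phi$ is an irreducible submodule, and by uniqueness $\phi(\pi) = L_{\underline{\lambda}}$, so $\pi \cong L_{\underline{\lambda}}$.

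The main obstacle, and the point that needs care, is verifying that Frobenius reciprocity in the precise form $\Hom_G(\pi,\Ind_B^G\underline{\lambda}) = \Hom_B(\pi|_B,\underline{\lambda})$ is valid in this algebraic-group setting with $\Ind$ meaning the space of algebraic functions on $G$ transforming by $\underline{\lambda}$ on the left — i.e. that it is genuine induction (right adjoint to restriction), not compact or $c$-induction. For affine algebraic groups and the variety $B\backslash G$ this adjunction holds, but one should make sure $\pi$, being irreducible and highest weight, is finite-dimensional and algebraic so the Hom-spaces are the expected ones; alternatively one can bypass abstract adjunction entirely by writing the map explicitly: send $w \in \pi$ to the function $g \mapsto \langle$ the $B$-coinvariant projection of $gw$ onto the $\underline{\lambda}$-line $\rangle$, and check directly that this is a well-defined algebraic, $G$-equivariant, nonzero map. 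Either way the content is the same, and the remaining steps are formal.
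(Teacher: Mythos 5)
Your strategy --- embed $\pi$ into an induced module via Frobenius reciprocity, then invoke uniqueness of the irreducible submodule from part (3) --- is the right one and is indeed what the paper does. But the target module is wrong, and the gap is in the claim that ``projecting onto the line spanned by $v$'' yields a nonzero element of $\Hom_B(\pi|_B,\underline{\lambda})$. The line $\langle v\rangle$ is a $B$-\emph{sub}module of $\pi$, not a $B$-quotient; since $B$ is not reductive, there is no $B$-equivariant projection onto it. In fact $\Hom_B(\pi|_B,\underline{\lambda})=0$ whenever $\underline{\lambda}$ is not $\omega_0$-fixed: a one-dimensional $B$-quotient of $\pi$ corresponds to a $U$-fixed line in $\pi^\vee$, and the $U$-fixed line in $\pi^\vee$ has $T$-weight $\omega_0(\underline{\lambda})^{-1}$, not $\underline{\lambda}^{-1}$. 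Consistently with this, Theorem 7.3(1) says $\Ind_B^G\underline{\lambda}=0$ when $\underline{\lambda}$ is dominant and non-central, so the map $\phi$ you construct is to the zero module.

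The fix --- and it is exactly the paper's proof --- is to use $\omega_0(\underline{\lambda})$ in place of $\underline{\lambda}$: since $\pi^\vee$ is a highest weight module of weight $\omega_0(\underline{\lambda})^{-1}$, pairing against a highest weight vector $v_0\in\pi^\vee$ gives a nonzero $B$-quotient $\pi\twoheadrightarrow\omega_0(\underline{\lambda})$, hence by Frobenius reciprocity a nonzero $G$-map $\pi\to\Ind_B^G\omega_0(\underline{\lambda})$, explicitly $v\mapsto\bigl(g\mapsto\langle g^{-1}v_0,v\rangle\bigr)$. This module is nonzero precisely because $\underline{\lambda}$ is dominant, and irreducibility of $\pi$ plus uniqueness of the socle in part (3) then identifies $\pi$ with that socle. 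So you had the right skeleton but misidentified the $B$-character through which the highest weight vector ``sees'' a quotient of $\pi$; the dual representation is the missing ingredient.
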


\begin{proof}{\sf 
If $\pi$ is an irreducible representation with highest weight $\underline{\lambda}$ then we construct a map $\pi \to \Ind_B^G\omega_0(\underline{\lambda})$.
This will do the job.

Construction of $\pi \to \Ind_B^G \omega_0(\underline{\lambda})$ is nothing but Frobenius reciprocity, and depends on the
observation that the dual representation $\pi^\vee$ is a highest weight module for weight $\omega_0(\underline{\lambda})^{-1}$.
We will omit a  proof of this simple observation on $\pi^\vee$.
Now, let $v_0$ be a highest weight vector in the dual space $\pi^\vee$, and let $\langle -,- \rangle$ 
be the canonical
 bilinear form $\langle -,- \rangle: \pi \times \pi^\vee \to k$.
Then $\pi \to \Ind_B^G \omega_0(\underline{\lambda})$ is given by $v \mapsto f_v(g) = \langle g^{-1}v_0, v\rangle$ 
}\end{proof}

\begin{thm}[\sf Borel-Weil]{\sf \em
$\Ind_B^G\underline{\lambda} = H^0(\underline{\lambda})$  is already irreducible in character $0$.
}\end{thm}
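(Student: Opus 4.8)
The plan is to prove the Borel--Weil theorem in characteristic $0$ by showing that $\Ind_B^G\underline{\lambda}$, which by Theorem (the one listing properties of $\Ind_B^G\underline{\lambda}$) is a finite-dimensional $G$-module with a unique irreducible submodule $L_\lambda$ and a one-dimensional space of $U$-fixed vectors, is in fact semisimple, hence equal to its unique irreducible submodule. The mechanism for semisimplicity in characteristic $0$ is Weyl's complete reducibility theorem for the reductive group $\GL_n$ (equivalently, for its semisimple part together with the central torus, which acts through a single character on any highest weight module). So the first step is simply to invoke complete reducibility: $\Ind_B^G\underline{\lambda}$ decomposes as a direct sum of irreducible $G$-submodules.

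Next I would argue that there can be only one summand. Each irreducible summand, being an algebraic representation of $\GL_n(\bar\F_p)$ in characteristic $0$, is itself a highest weight module by (the characteristic-$0$ analogue of) the highest-weight classification; but more directly, any nonzero $G$-submodule of $\Ind_B^G\underline{\lambda}$ contains a $U$-fixed vector, since a unipotent group over a field always has a fixed vector in any algebraic representation (the same fact already used in the proof of parts (2), (3) of the preceding theorem). Since $\dim (\Ind_B^G\underline{\lambda})^U = 1$, at most one summand can be nonzero. Therefore $\Ind_B^G\underline{\lambda}$ is irreducible, i.e. equal to $L_\lambda = H^0(\underline{\lambda})$.

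An alternative, more self-contained route avoids quoting Weyl's theorem: realize $\pi = \Ind_B^G\underline{\lambda}$ and its dual $\pi^\vee \cong \Ind_B^G \omega_0(\underline\lambda)^{\text{-ish}}$ (the same duality computation appearing in the proof of the lemma identifying irreducible highest weight modules), and show the canonical pairing between the unique irreducible submodule $L_\lambda \subseteq \pi$ and the unique irreducible quotient of $\pi$ is nondegenerate, forcing $\pi = L_\lambda$. Concretely: $\pi$ has a unique irreducible submodule $L_\lambda$ with highest weight $\underline\lambda$; dually $\pi^\vee$ has a unique irreducible submodule, so $\pi$ has a unique irreducible quotient, which one computes to again have highest weight $\underline\lambda$ (both the socle and the head sit in the weight-$\underline\lambda$ eigenline, which is one-dimensional). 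The composite $L_\lambda \hookrightarrow \pi \twoheadrightarrow \mathrm{head}(\pi)$ is then a nonzero map between isomorphic irreducibles, hence an isomorphism, which forces $\pi = L_\lambda$.

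I expect the main obstacle to be purely expository: deciding how much of the characteristic-$0$ highest-weight theory (Weyl's complete reducibility, or the self-duality facts for $\Ind_B^G\underline\lambda$) one is willing to take as known versus prove. The actual content is light—in characteristic $0$ everything is semisimple and the one-dimensionality of the $U$-fixed space does the rest—so the only real decision is which black box to lean on. If complete reducibility is granted, the proof is a two-line deduction from the already-established structure theorem; if not, the duality argument in the last paragraph is the fallback, and its one delicate point is checking that the unique irreducible quotient really does have highest weight $\underline\lambda$ rather than some lower weight, which follows from the $U$-fixed line being exactly the $\underline\lambda$-weight line.
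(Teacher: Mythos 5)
The paper states Borel--Weil without proof, so there is no proof of record to compare against; what follows is a review of your argument on its own terms. Your main line is correct and is the standard one: in characteristic~$0$, Weyl's complete reducibility theorem makes the finite-dimensional module $\Ind_B^G\underline{\lambda}$ a direct sum of irreducibles; each nonzero summand has a nonzero $U$-fixed vector because a unipotent group always fixes a vector in any algebraic representation; and since $\dim(\Ind_B^G\underline{\lambda})^U=1$ (part (2) of the preceding theorem) there can be only one summand. Clean and complete.

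Your ``alternative, more self-contained'' duality route, however, has a genuine gap, and it sits exactly at the point you yourself flag as delicate. You assert that the unique irreducible quotient (the head) again has highest weight $\underline\lambda$ because ``both the socle and the head sit in the weight-$\underline\lambda$ eigenline.'' For the socle this is fine: an irreducible submodule has a $U$-fixed vector, which must lie in the one-dimensional $\underline\lambda$-weight line, so the socle is $L_{\underline\lambda}$. But the head is a \emph{quotient}, and the $\underline\lambda$-weight line of $\pi$ can perfectly well lie in the radical and die under $\pi\twoheadrightarrow\mathrm{head}(\pi)$; nothing in the structure established so far rules this out. This is not a hypothetical failure mode: in characteristic $p$, for $\SL_2$ the module $H^0(p)$ has socle $L(p)$ (dimension $2$) and head $L(p-2)$ (dimension $p-1$), so socle and head really are different irreducibles. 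Put differently, the head of $H^0(\underline\lambda)$ is the dual of the socle of the Weyl module $V(\underline\lambda)=H^0(\underline\lambda)^\vee$, and knowing that socle equals $L_{\underline\lambda}$ is essentially the statement you are trying to prove. (There is a second, smaller gap: even granting head $=L_{\underline\lambda}$, your claim that the composite $L_{\underline\lambda}\hookrightarrow\pi\twoheadrightarrow\mathrm{head}(\pi)$ is automatically nonzero is not justified; one has to run a multiplicity-one argument using $\dim\pi_{\underline\lambda}=1$ to exclude the socle sitting inside the radical.) So the duality argument cannot avoid importing complete reducibility, or an equivalent input such as the Lie-algebra cohomology vanishing behind Kostant's theorem or a Casimir/linkage-principle argument, at precisely the step you isolated. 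Rely on your first argument; the second, as written, is not actually self-contained.
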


\v2
\noindent{\bf Cartan Weyl theory:}
This works for $G = \GL_n(\C)$ or the compact group $\Un_n$.
For integers $\underline{\lambda}:= \lambda_1 \geq \lambda_2 \geq \cdots \geq \lambda_n$ there exists a unique irreducible representation of $\GL_n(\C)$ such that its character on the diagonal torus  is given by
$$\theta_{\underline{\lambda}}(t) = \frac{\det 
\begin{pmatrix} t_1^{\lambda_1 + n - 1} & \cdots & t_n^{\lambda_1 + n- 1} \\ 
t_1^{\lambda_2 + n - 2} & \cdots & t_n^{\lambda_2 + n - 2} \\
\vdots & & \vdots \\ 
t_1^{\lambda_n} & \cdots & t_n^{\lambda_n} \end{pmatrix}}
{\det \begin{pmatrix} t_1^{n - 1} & \cdots & t_n^{n- 1} \\ 
t_1^{n - 2} & \cdots & t_n^{n - 2} \\
\vdots & & \vdots \\ 
1 & \cdots & 1 \end{pmatrix}}$$

There is such a theorem for all reductive algebraic groups over $\C$.

\begin{thm}
The irreducible algebraic representations of $\GL_n(\bar{\F}_p)$ are also parametrized by integers $\lambda_1 \geq \cdots \geq \lambda_n$, however its character or dimension is not so easy to describe. 
\end{thm}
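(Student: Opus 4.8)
The plan is to recognize that this theorem is a restatement of the highest-weight classification already built up in this section, and to assemble it from Definition~7.1, Theorem~7.2, Theorem~7.3, and Lemma~7.4, all of which I take as given. Concretely, I would exhibit two mutually inverse maps: the forward map $\pi \mapsto \underline{\lambda}(\pi)$ sending an irreducible algebraic representation to its highest weight (supplied by Theorem~7.2, which also guarantees this weight is a well-defined dominant tuple $\lambda_1 \geq \cdots \geq \lambda_n$), and a reverse map $\underline{\lambda} \mapsto L_{\underline{\lambda}}$ producing an irreducible module with a prescribed dominant highest weight. The substance is in Theorems~7.2--7.3, so the work here is really the bookkeeping that glues them together; the final clause about characters and dimensions is not a proof obligation but a signpost, and I would treat it as a remark.

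For the reverse map, fix integers $\lambda_1 \geq \cdots \geq \lambda_n$ and set $\underline{\lambda} = (\lambda_1, \dots, \lambda_n)$. Then $\omega_0(\omega_0(\underline{\lambda})) = \underline{\lambda}$ is dominant, so by Theorem~7.3(1) the finite-dimensional module $\Ind_B^G \omega_0(\underline{\lambda}) = H^0(\omega_0(\underline{\lambda}))$ is nonzero, and I let $L_{\underline{\lambda}}$ be its unique irreducible submodule (Theorem~7.3(3)). I would then check that $L_{\underline{\lambda}}$ is a highest weight module of weight $\underline{\lambda}$: being nonzero it contains a nonzero $U$-fixed vector (unipotent groups have fixed points in algebraic representations, as used already in the proof of Theorem~7.3), which must coincide with the unique $U$-fixed vector of $\Ind_B^G \omega_0(\underline{\lambda})$ furnished by Theorem~7.3(2); on the line it spans, $T$ acts through the character $\underline{\lambda}$ — read off from the defining transformation law $f(bg) = \omega_0(\underline{\lambda})(b) f(g)$ together with the density of the big Bruhat cell $U^- B$ — and this vector generates $L_{\underline{\lambda}}$ since $L_{\underline{\lambda}}$ is irreducible. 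By Lemma~7.4, $L_{\underline{\lambda}}$ is \emph{the} irreducible highest weight module of weight $\underline{\lambda}$.

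Next I would verify bijectivity. The composite $\underline{\lambda} \mapsto L_{\underline{\lambda}} \mapsto \underline{\lambda}(L_{\underline{\lambda}})$ is the identity by the previous paragraph. Conversely, given an irreducible $\pi$, it is by Theorem~7.2 a highest weight module for its dominant weight $\underline{\lambda}(\pi)$, so Lemma~7.4 identifies $\pi$ with $L_{\underline{\lambda}(\pi)}$; this also shows the $L_{\underline{\lambda}}$ for distinct dominant $\underline{\lambda}$ are pairwise non-isomorphic, because the highest weight is an isomorphism invariant — it is recovered from the one-dimensional space $\pi^U$, whose uniqueness is exactly Theorem~7.3(2). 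That completes the bijection between irreducible algebraic representations of $\GL_n(\bar{\F}_p)$ and integer tuples $\lambda_1 \geq \cdots \geq \lambda_n$.

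For the last clause, there is nothing to prove, only to explain: in characteristic $0$ one has Borel--Weil (Theorem~7.5), so $L_{\underline{\lambda}} = H^0(\underline{\lambda})$ and the Weyl character/dimension formulas (the Cartan--Weyl formula recalled above) give closed expressions; in characteristic $p$ the inclusion $L_{\underline{\lambda}} \subseteq H^0(\omega_0(\underline{\lambda}))$ is typically proper, and determining the character of $L_{\underline{\lambda}}$ is equivalent to inverting the matrix of decomposition numbers $[H^0(\mu) : L_{\underline{\nu}}]$, which are constrained by the linkage and translation principles and the Jantzen sum formula, and are given for large $p$ by Kazhdan--Lusztig polynomials (the Lusztig conjecture) but admit no uniform closed form — and Williamson's counterexamples show even the expected effective bound on $p$ fails. \textbf{Main obstacle:} the theorem itself offers no serious obstacle given the preceding results; the only genuinely delicate point is getting the $\omega_0$ and dominant/anti-dominant conventions straight in the construction of $L_{\underline{\lambda}}$ (ensuring $\Ind_B^G$ is applied to the weight for which it is nonzero and that $T$ acts by the advertised character on the unique $U$-line). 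The real ``obstacle'' lives in the last clause, and it is the open-endedness of the modular character problem, which the statement is precisely flagging.
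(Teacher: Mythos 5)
Your reading is right: the paper states this theorem without a separate proof, treating it as a summary of the highest-weight machinery built up in Theorems~7.2, 7.3 and Lemma~7.4, with the final clause about characters and dimensions serving as a remark rather than a claim to be established. Your proposal supplies exactly the bookkeeping the paper leaves implicit — the mutually inverse maps $\pi \mapsto \underline{\lambda}(\pi)$ and $\underline{\lambda} \mapsto L_{\underline{\lambda}} \subseteq \Ind_B^G \omega_0(\underline{\lambda})$ — and you navigate the $\omega_0$-convention correctly, so this is essentially the paper's (implicit) argument made explicit.

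One small caution worth flagging: the paper's Theorem~7.3(3) writes $L_\lambda$ for the unique irreducible submodule of $\Ind_B^G \underline{\lambda}$, indexing by the inducing weight (which must be anti-dominant for the space to be nonzero), whereas you index $L_{\underline{\lambda}}$ by its highest weight (dominant). The two conventions differ by an application of $\omega_0$. You handle this consistently, but if this were to be folded into the paper's text, the notational clash with Theorem~7.3(3) would need to be resolved. The closing paragraph on decomposition numbers, linkage, and the Lusztig conjecture is accurate and appropriately treated as commentary rather than proof content.
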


Lusztig has conjectured the character theory of $L_{\lambda}$, equivalently, reduction mod $\ell$ of the Weyl module $\Ind_B^G \lambda$ in characteristic $0$.

\v2
\begin{thm}[\sf Steinberg]{\sf
Let $G$ be a semi-simple algebraic group over $\F_p$. Then any irreducible representation of $G(\F_p)$ in characteristic $p$ is obtained by 
restricting an irreducible algebraic representation $\pi_\lambda$ of $G(\bar{\F}_p)$ where the highest weight $\lambda = \lambda_1\omega_1+\lambda_2\omega_2+ \cdots + \lambda_n \omega_n$, with $\omega_i$ the fundamental weights of $G$, and $0 \leq \lambda_i \leq (p-1)$. 

}\end{thm}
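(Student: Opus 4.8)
The plan is to prove the two assertions separately: first, that the restriction map sending an irreducible algebraic representation $\pi_\lambda$ of $G(\bar{\F}_p)$ with $p$-restricted highest weight (i.e. $0 \le \lambda_i \le p-1$ for all $i$) to a representation of the finite group $G(\F_p)$ remains irreducible; and second, that every irreducible $\bar{\F}_p$-representation of $G(\F_p)$ arises this way. For the finiteness bookkeeping I would work throughout with the Frobenius endomorphism $\Fr$ of $G$ and recall the Steinberg tensor product theorem for the algebraic group: any irreducible algebraic representation of $G(\bar{\F}_p)$ is uniquely $\bigotimes_{j\ge 0}\Fr^j(\pi_{\mu_j})$ with each $\mu_j$ a $p$-restricted dominant weight and only finitely many $\mu_j$ nonzero; this is the higher-rank analogue of Lemma 1.3 stated earlier for $\GL_2$, and I would cite it rather than reprove it.

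First I would handle irreducibility of the restriction. The key input is Corollary 6.2 (every irreducible $\bar{\F}_p$-representation of $G(\F_p)$ is a quotient of a principal series $\Ind_B^G\chi$ for a character $\chi$ of $T(\F_p)$), together with the "basic argument in characteristic $p$" of Section 6: a unipotent group in characteristic $p$ has nonzero fixed vectors, so both $\pi_\lambda^{U(\F_p)}$ and $\pi_\lambda^{U^-(\F_p)}$ are nonzero. One shows these fixed spaces are each one-dimensional — this is the finite-field shadow of Theorem 7.3(2), and is where the hypothesis $0 \le \lambda_i \le p-1$ enters, since for $p$-restricted weights the weight $\lambda$ occurs in $\pi_\lambda|_{G(\F_p)}$ with multiplicity one and the $U(\F_p)$-fixed line is the $\lambda$-weight line. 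Given that $\pi_\lambda^{U(\F_p)}$ is one-dimensional, any $G(\F_p)$-submodule $W \subseteq \pi_\lambda$ has $W^{U^-(\F_p)} \ne 0$, hence contains the (unique) lowest weight line $\omega_0(\lambda)$; similarly any nonzero quotient contains the image of the highest weight line. Since the algebraic module $\pi_\lambda$ is generated over $G(\bar{\F}_p)$ — in fact already over the subgroups $U^-(\F_p), T(\F_p), U(\F_p)$ when $\lambda$ is $p$-restricted, by an explicit computation in the hyperalgebra / distribution algebra — by its highest weight vector, the submodule $W$ is all of $\pi_\lambda$. This forces irreducibility of $\pi_\lambda|_{G(\F_p)}$.

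Next, the counting / surjectivity step. The number of $p$-restricted dominant weights is exactly $p^{\operatorname{rk}(G)}$ (there are $p$ choices $0,\dots,p-1$ for each of the $\operatorname{rk}(G)$ fundamental-weight coefficients), and by the Remark after Lemma 1.1 this equals the number of $p$-regular conjugacy classes of $G(\F_p)$, hence the number of irreducible $\bar{\F}_p$-representations of $G(\F_p)$. So it suffices to show the restrictions $\pi_\lambda|_{G(\F_p)}$, $\lambda$ ranging over $p$-restricted weights, are pairwise non-isomorphic: then we have $p^{\operatorname{rk}(G)}$ distinct irreducibles and that is all of them. Non-isomorphism follows because $\pi_\lambda|_{G(\F_p)}$ determines $\lambda \bmod (p-1)$ on the center/torus via its $T(\F_p)$-highest weight, but more care is needed since distinct $p$-restricted $\lambda$'s can have weights congruent mod $p-1$; here one invokes that the formal characters of the $\pi_\lambda$ restricted to the finite torus $T(\F_p)$ are linearly independent for $p$-restricted $\lambda$ — a statement about the $p^{\operatorname{rk}}$ distinct reductions of Weyl characters — which is the cleanest way to separate them.

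The main obstacle is the one-dimensionality of the $U(\F_p)$-fixed space $\pi_\lambda^{U(\F_p)}$ and the companion fact that $\pi_\lambda$ is generated over the \emph{finite} group by its highest weight vector. For the algebraic group these are Theorem 7.3(2)–(3), but passing from the algebraic group $G(\bar{\F}_p)$ to the finite group $G(\F_p)$ is exactly the delicate point — it is false without the $p$-restriction hypothesis (for $\lambda$ with some $\lambda_i \ge p$ the restriction decomposes, as Lemma 1.3 makes vivid for $\GL_2$). I expect to need the structure of the distribution algebra $\operatorname{Dist}(G)$ and Steinberg's observation that for $p$-restricted $\lambda$ the finite group $G(\F_p)$ already "sees" enough of $\operatorname{Dist}(G)$ to act irreducibly; this is the technical heart and I would isolate it as a lemma rather than expand it inline.
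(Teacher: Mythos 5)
The paper actually states this theorem of Steinberg \emph{without proof} --- it appears in Section 7 as a black box to be used, not established --- so there is no paper argument to compare against. I can only assess your sketch on its own merits, and the structure you lay out (irreducibility of the restriction $\pi_\lambda|_{G(\F_p)}$ for $p$-restricted $\lambda$, then a count to show you have found everything) is indeed the standard route, and you correctly locate the one genuinely hard step.

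Two places where the sketch as written is thinner than it should be. First, in the irreducibility step, you alternate between $U(\F_p)$-fixed vectors and $U^-(\F_p)$-fixed vectors and between ``generated by highest weight vector'' and ``contains the lowest weight line'' in a way that doesn't quite close: pick one consistent version, e.g.\ show $\dim \pi_\lambda^{U(\F_p)} = 1$ (so every nonzero submodule $W$ has $W \ni v^+$, the highest weight vector, since $W^{U(\F_p)} \neq 0$ by Lemma~6.1 and is contained in the one-dimensional $\pi_\lambda^{U(\F_p)}$), \emph{and} show $\pi_\lambda$ is generated by $v^+$ already over $\bar{\F}_p[G(\F_p)]$; these two together give irreducibility cleanly, and both are exactly the place where the $p$-restriction enters via the hyperalgebra/$\operatorname{Dist}(G)$ argument you mention. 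Second, the non-isomorphism step is more delicate than the sketch lets on. The $T(\F_p)$-weight of $v^+$ does not separate the $p^{\operatorname{rk}}$ restricted weights --- $\lambda_i = 0$ and $\lambda_i = p-1$ give identical characters of $\F_p^\times$, and concretely the trivial representation ($\lambda = 0$) and the Steinberg representation ($\lambda = (p-1)\rho$) have the same highest $T(\F_p)$-weight. You flag this, but the proposed fix --- linear independence of the formal characters $\operatorname{ch}\,L(\lambda)$ after reduction to $X(T)/(p-1)X(T)$ --- is itself a nontrivial statement requiring an argument (it is not merely the linear independence of Weyl characters). The usual clean escape is to run the count the other way: use Corollary~6.2 to show every irreducible of $G(\F_p)$ occurs among the restrictions, and then equality of cardinalities ($p^{\operatorname{rk}}$ on both sides, via the Remark following Lemma~1.1) forces pairwise non-isomorphism for free, rather than proving non-isomorphism directly. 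Finally, note that the count $p^{\operatorname{rk}}$ for both the set of restricted weights and the set of $p$-regular classes implicitly assumes $G$ is semi-simple \emph{simply connected}; the theorem and the paper both suppress this hypothesis, but your argument needs it (for $G$ not simply connected the fundamental weights $\omega_i$ need not be characters of $T$, and the semisimple class count changes).
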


Finally, let us use these algebraic group theories to understand composition series of reduction $\pmod{p}$ of a principal series.

\v2
Recall that (abstract) principal series  representations  are defined as 
$$\Ind_{B(\F_q)}^{\GL_2(\F_p)}(\chi_1, \chi_2) 
= \left\{f:G(\F_p) \to \bar{\F}_q:f(bg) = \chi(b)f(g)\right\} .$$
There is a natural map 
$\Ind_{B(\bar{\F}_p)}^{\GL_2(\bar{\F}_p)} 
(\bar{\chi}_1, \bar{\chi}_2) \to \Ind_{B(\F_p)}^{\GL_2(\F_p)} (\chi_1, \chi_2)$, which consists in restricting algebraic 
functions $f$ on ${\GL_2(\bar{\F}_p)} $ with $\left \{f(bg) = \chi(b) f(g) | b \in {B(\bar{\F}_p)}, g \in  {\GL_2(\bar{\F}_p)} \right \}$ 
to abstract functions on $\GL_2(\F_p)$. Since $\omega_0(i_1,\i_2) = (i_2, i_1)$, the  space of algebraic functions, 
$\Ind_{B(\bar{\F}_p)}^{\GL_2(\bar{\F}_p)}(i_1, i_2)$ is nonzero if and only if $i_2 \geq i_1$, and the earlier remark
gives the arrow $\det^{i_1} \otimes \Sym^{i_2-i_1} V \rightarrow  \Ps(i_1, i_2) $ in the exact sequence:  
$$ 0 \rightarrow \det^{i_1} \otimes \Sym^{i_2-i_1} V \rightarrow  
\Ps(i_1, i_2) \rightarrow  \det^{i_2} \otimes \Sym^{(p-1) - (i_2-i_1)} V \rightarrow 0, ~~~{\rm if  }~~~~i_2 \geq  i_1 . $$
To get the second arrow, $\Ps(i_1, i_2) \rightarrow  \det^{i_2} \otimes \Sym^{(p-1) - (i_2-i_1)} V$, 
note that the dual of
$\Ps(i_1,i_2)$ is $\Ps(-i_1, -i_2) = \Ps(p-1-i_1,p-1-i_2)$ whereas the dual of 
$\Ind_B^G \underline{\lambda}$ is  $\Ind_B^G \omega_0(\underline{\lambda})$. The 
the second arrow, $\Ps(i_1, i_2) \rightarrow  \det^{i_2} \otimes \Sym^{(p-1) - (i_2-i_1)} V$, is obtained be dualizing the first arrow
$\det^{i_1} \otimes \Sym^{i_2-i_1} V \rightarrow  \Ps(i_1, i_2) $.

This justifies the  exact sequence for the principal series which arises in 
Lemma 4.1 by exhibiting a natural algebraic sub-representation of an abstract principal
series representation of $\GL_2(\F_p)$. A rather non-obvious 
assertion may be noted along the way that the map 
$\Ind_{B(\bar{\F}_p)}^{\GL_2(\bar{\F}_p)} 
(\bar{\chi}_1, \bar{\chi}_2) \to \Ind_{B(\F_p)}^{\GL_2(\F_p)} (\chi_1, \chi_2)$, which consists in restricting algebraic 
functions $f$ on ${\GL_2(\bar{\F}_p)} $ 
to abstract functions on $\GL_2(\F_p)$ is injective!

To get the Jordan-H\"older factors 
for the cuspidal representations which appears in Lemma 4.2, we 
refer to the geometric realization of discrete series in the cohomology of 
the projective curve $X$
$$XY^p + X^p Y - Z^{p+1} = 0,$$
which is the first example of a Deligne-Lusztig variety. 
This example is worked out in [HJ]  from the point of view of 
crystalline cohomology. It seems interesting to calculate $H^0(X, \Omega^1)$ as well $H^1(X,O_X)$ as a module for 
$\GL_2(\F_p) \times \F_{p^2}^\times$ in characteristic $p$.
}

\section{Remarks on mod $\ell \not = p$.}

{\sf One reason why the theory mod $\ell$,  $\ell \not = p$, tends to be much easier
is because the elaborate theory of $\C$-representations of $\GL_n(k)$ due to 
Bernstein-Zelevinsky extends in most aspects for $\ell \not = p$; in particular, 
the theory of derivatives makes sense $\mod \ell$, and the Bernstein-Zelevinsky filtration of a representation of $\GL_n(k)$ when restricted to the mirabolic has the same 
structure. Using these methods, one can prove that the reduction mod $\ell$ of an irreducible, integral representation (i.e., one which leaves
a lattice invariant) is of finite length.

We recall that the theory of derivatives for representations of $\GL_n(k)$ begins by fixing a non-trivial character $\psi_0: k \rightarrow \C^\times$. 
One can in fact assume that $\psi_0: k \rightarrow \bar{\Z}_\ell^\times \subset \C^\times$, such that its reduction $\mod \ell$ gives rise to a non-trivial additive character
$\bar{\psi}_0: k \rightarrow \bar{\F}^\times_\ell$, and the following lemma is elementary to prove.

\begin{Lem} Let $P_0$ be the mirabolic subgroup of $\GL_n(k)$ consisting of matrices in $\GL_n(k)$ with last row $(0,0,\cdots, 1)$. Associated to 
$\bar{\psi}_0$, define an additive character $\bar{\psi}: U \rightarrow \bar{\F}_\ell^\times$ on the group of upper triangular unipotent matrices by
$\bar{\psi}(u) = \bar{\psi}_0(u_{1,2} + u_{2,3} +\cdots, + u_{n-1,n})$. Then, ${\rm Ind}_U^{P_0} \bar{\psi}$ is an irreducible representation of $P_0$.
\end{Lem}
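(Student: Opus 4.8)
The plan is to interpret the induction here as \emph{compact} induction $\ind_U^{P_0}\bar\psi$ — the Gelfand--Graev representation of the mirabolic group, which is the object that shows up in the theory of derivatives and the one that is irreducible (full smooth induction from the non-cocompact $U$ is already reducible for $\GL_2$) — and to run the Bernstein--Zelevinsky inductive argument, which survives reduction $\bmod\ \ell$ for $\ell\ne p$. Write $P_n$ for the mirabolic of $\GL_n(k)$, $U_n$ for its maximal unipotent subgroup, so that $P_0=P_n$, $U=U_n$, $\bar\psi=\bar\psi_n$; let $V_n\cong k^{n-1}$ be the abelian unipotent radical of $P_n$ (its last column), so $P_n/V_n\cong\GL_{n-1}$, and set $\theta_n:=\bar\psi_n|_{V_n}$, which is nontrivial precisely because $\bar\psi_0$ is (this is where one uses $\ell\ne p$ — there are no nontrivial characters $k\to\bar{\F}_\ell^\times$ when $\ell=p$). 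I would then invoke the four functors $\Psi^-\colon\Rep(P_n)\to\Rep(\GL_{n-1})$ ($V_n$-coinvariants), $\Phi^-\colon\Rep(P_n)\to\Rep(P_{n-1})$ ($(V_n,\theta_n)$-twisted coinvariants), and their adjoints $\Psi^+$ (inflation) and $\Phi^+=\ind_{P_{n-1}V_n}^{P_n}\big((-)\ot\theta_n\big)$, together with the standard facts, all valid mod $\ell$ for $\ell\ne p$: $\Phi^-$ and $\Psi^-$ are exact (being filtered colimits of $\theta_n$-twisted averaging projectors over the compact open subgroups of $V_n$, whose pro-orders are powers of $p$, hence invertible in $\bar{\F}_\ell$); $\Phi^-\Phi^+\iso\mathrm{id}$, $\Psi^-\Psi^+\iso\mathrm{id}$, $\Psi^-\Phi^+=0$; and for every $\tau\in\Rep(P_n)$ there is the exact sequence $0\to\Phi^+\Phi^-\tau\to\tau\to\Psi^+\Psi^-\tau\to 0$, so that $\tau=0$ as soon as $\Phi^-\tau=\Psi^-\tau=0$.

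The argument then has three steps. First, identify $\ind_{U_n}^{P_n}\bar\psi_n$ with $\tau_n:=(\Phi^+)^{n-1}(\mathbf 1)$, where $\mathbf 1$ is the trivial representation of $P_1=\{1\}$: inducing in stages through $P_{n-1}V_n$ and using $\bar\psi_n|_{V_n}=\theta_n$ and $\bar\psi_n|_{U_{n-1}}=\bar\psi_{n-1}$ gives $\ind_{U_n}^{P_n}\bar\psi_n=\ind_{P_{n-1}V_n}^{P_n}\big((\ind_{U_{n-1}}^{P_{n-1}}\bar\psi_{n-1})\ot\theta_n\big)=\Phi^+(\ind_{U_{n-1}}^{P_{n-1}}\bar\psi_{n-1})$, and one concludes by induction on $n$. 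Second, note $\Phi^-\tau_n=\tau_{n-1}$ and $\Psi^-\tau_n=0$, immediate from $\Phi^-\Phi^+\iso\mathrm{id}$ and $\Psi^-\Phi^+=0$. Third, prove by induction on $n$ that $\tau_n$ is irreducible: the case $n=1$ is trivial, and for the inductive step let $0\ne\sigma\subseteq\tau_n$ be a subrepresentation. By exactness of $\Phi^-$ and irreducibility of $\Phi^-\tau_n=\tau_{n-1}$ (induction hypothesis), $\Phi^-\sigma$ is either $0$ or $\tau_{n-1}$, while $\Psi^-\sigma\subseteq\Psi^-\tau_n=0$. If $\Phi^-\sigma=0$ then $\Phi^-\sigma=\Psi^-\sigma=0$ forces $\sigma=0$, a contradiction; hence $\Phi^-\sigma=\tau_{n-1}$, whence $\Phi^-(\tau_n/\sigma)=0$ by exactness and $\Psi^-(\tau_n/\sigma)=0$ as a quotient of $\Psi^-\tau_n=0$, so $\tau_n/\sigma=0$ and $\sigma=\tau_n$. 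Thus $\ind_U^{P_0}\bar\psi=\tau_n$ is irreducible.

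The main obstacle is not this short induction but the bookkeeping behind the mod $\ell$ input: exactness of the (twisted) Jacquet functors attached to the $p$-adic vector group $V_n$ over coefficient field $\bar{\F}_\ell$, the adjunctions $(\Phi^-,\Phi^+)$ and $(\Psi^-,\Psi^+)$ with no spurious modulus twist (harmless here since $V_n$ is a vector group and $P_{n-1}$ normalises $\theta_n$), and the base-change identities $\Phi^-\Phi^+\iso\mathrm{id}$, $\Psi^-\Phi^+=0$ and the two-term filtration. For $\ell\ne p$ these are exactly the ingredients of Vign\'eras' extension of Bernstein--Zelevinsky, i.e. what is meant above by ``the theory of derivatives makes sense $\bmod\ \ell$''; granting them, the proof is the same as over $\C$. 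One could instead compute $\End_{P_0}(\ind_U^{P_0}\bar\psi)$ directly from the geometric lemma and the Bruhat stratification of $U\backslash P_0/U$ — every stratum but the open one being annihilated by the nondegeneracy of $\bar\psi$, and the open one contributing exactly $\bar{\F}_\ell$ — but passing from a one-dimensional endomorphism algebra to irreducibility then requires extra care (a unique irreducible sub and a unique irreducible quotient, seen to coincide), so the functorial route is cleaner.
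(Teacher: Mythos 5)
The paper offers no proof of this lemma: the sentence immediately preceding it reads ``the following lemma is elementary to prove,'' and then moves directly to the corollary. So there is nothing in the source to compare against; your proof has to be judged on its own. Judged so, it is correct, and it is the standard route: realize the induced representation as $(\Phi^+)^{n-1}(\mathbf 1)$ via the Bernstein--Zelevinsky functors on the mirabolic tower, compute $\Phi^-\tau_n\cong\tau_{n-1}$ and $\Psi^-\tau_n=0$, and run the short induction using exactness of $\Phi^-$, $\Psi^-$ and the criterion $\tau=0\iff\Phi^-\tau=\Psi^-\tau=0$ coming from the filtration $0\to\Phi^+\Phi^-\to\mathrm{id}\to\Psi^+\Psi^-\to 0$. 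All of this is available over $\bar{\F}_\ell$ for $\ell\ne p$ (Vign\'eras' extension of BZ), which is exactly the point the surrounding text of the paper is making (``the theory of derivatives makes sense mod $\ell$''), so your proof is also in the spirit the author clearly intends.

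Two small remarks worth retaining. First, your reading of $\mathrm{Ind}$ as \emph{compact} induction $\operatorname{ind}_U^{P_0}\bar\psi$ is not merely a convenience but a necessary correction: $U$ is not cocompact in $P_0$ (already for $n=2$, $P_0/U\cong k^\times$), so the compactly supported functions form a proper nonzero $P_0$-submodule of the full smooth induction, which is therefore reducible as literally stated. The object the lemma wants, and the one that enters the corollary on reduction mod $\ell$ of supercuspidals, is the compactly induced Gelfand--Graev module of $P_0$. Second, you are also right to flag where $\ell\ne p$ enters at the level of hypotheses, not only of tools: there is no nontrivial smooth character $k\to\bar{\F}_p^\times$, so the statement is vacuous mod $p$.
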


\begin{cor} A supercuspidal representation of $\GL_n(k)$, with integral central character, has an integral model over  ${\mathcal O}_\ell$ a finite extension of 
${\Z}_\ell$ which is the ring of integers in a finite extension $K$ of $\Q_\ell$, 
i.e., the representation $V$ which we assume is defined over $K$, contains a free ${\mathcal O}_\ell$ submodule $L$ which is invariant under $\GL_n(k)$ and 
with $L \otimes_{{\mathcal O}_\ell}K = V$. Let $\m_\ell$ be the maximal ideal in ${\mathcal O}_\ell$. 
Then the  reduction  $L/( \m_\ell L)$  which is a representation space for  $\GL_n(k)$ (called the reduction mod $\ell$ of $V$),  
is irreducible since its restriction to $P_0$ is so.
\end{cor}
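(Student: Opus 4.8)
The plan is to deduce the corollary from the preceding Lemma (the irreducibility of $\mathrm{Ind}_U^{P_0}\bar\psi$) together with the Bernstein–Zelevinsky structure of a supercuspidal representation restricted to the mirabolic subgroup, in a way that mirrors the argument already given for finite groups (part (a) of the Lemma on cuspidal representations of $\GL_n(\F_q)$). First I would fix a non-trivial additive character $\psi_0: k\to\bar\Z_\ell^\times$ whose reduction mod $\ell$ is still non-trivial, as arranged in the discussion preceding the Lemma, and recall that for $\GL_n(k)$ over $\C$ (or over $\bar\Q_\ell$) a supercuspidal representation $\pi$, restricted to $P_0$, is isomorphic to $\mathrm{ind}_U^{P_0}\psi$ where $\psi$ is the corresponding non-degenerate character of $U$ — this is the Bernstein–Zelevinsky/Gelfand–Kazhdan description of the restriction to the mirabolic (the top derivative is $\C$ and all lower ones vanish for a supercuspidal), and the excerpt explicitly licenses the use of B–Z theory mod $\ell\ne p$.

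Next I would address the integrality. Since $\pi$ has integral central character and is supercuspidal, one can choose a $\GL_n(k)$-stable $\O_\ell$-lattice $L$: pick any $\O_\ell$-lattice $L_0$ in a $P_0$-stable line realizing the Whittaker functional, or more simply use that $\pi$ is generated over $P_0$ by a single vector $v$ on which $U$ acts by $\psi$; the $\O_\ell[\GL_n(k)]$-submodule generated by $v$ is then a lattice because $\pi|_{P_0}\cong\mathrm{ind}_U^{P_0}\psi$ has an obvious integral structure $\mathrm{ind}_U^{P_0}(\psi,\O_\ell)$ which is finitely generated over $\O_\ell$ on each compact-open level and on which the central character acts integrally, and smoothness plus the Cartan/Iwasawa decompositions let one spread this to a $\GL_n(k)$-stable lattice. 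With such an $L$ in hand, $L|_{P_0}$ contains, and by comparing with the integral model of $\mathrm{ind}_U^{P_0}\psi$ is equal to, $\mathrm{ind}_U^{P_0}(\bar\psi\text{-lift})$ up to finite-index commensurability; hence $(L/\m_\ell L)|_{P_0}\cong \mathrm{ind}_U^{P_0}\bar\psi$.

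Finally I would invoke the Lemma just proved: $\mathrm{Ind}_U^{P_0}\bar\psi$ (equivalently $\mathrm{ind}_U^{P_0}\bar\psi$, since the quotient $U\backslash P_0$ carries no extra subtlety here for the irreducibility statement in the Gelfand–Kazhdan/B–Z sense) is irreducible as a representation of $P_0$ over $\bar\F_\ell$. Therefore $L/\m_\ell L$ is already irreducible as a $P_0$-module, a fortiori irreducible as a $\GL_n(k)$-module, which is exactly the assertion. By the Brauer–Nesbitt-type independence (the semisimplification of the reduction is independent of the lattice, noted in the excerpt), irreducibility of one reduction forces all reductions to be this same irreducible representation.

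The main obstacle I anticipate is the integral-model step: producing an honest $\GL_n(k)$-stable $\O_\ell$-lattice $L$ whose mod-$\ell$ reduction restricts on $P_0$ to \emph{exactly} $\mathrm{ind}_U^{P_0}\bar\psi$ rather than to some finite-length representation merely having it as a subquotient. This requires knowing that the B–Z filtration is strictly compatible with an integral structure — i.e. that $\pi|_{P_0}$ admits an $\O_\ell$-lattice that is itself of the form $\mathrm{ind}_U^{P_0}(\text{integral }\psi)$ — which in turn rests on the mod $\ell$ theory of derivatives being exact and the relevant induction functors commuting with reduction mod $\ell$; this is precisely the technical input from Vignéras's extension of Bernstein–Zelevinsky that the section's opening paragraph alludes to, and I would cite it rather than reprove it.
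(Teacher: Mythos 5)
Your final step — that once an integral model $L$ exists, the irreducibility of $L/\m_\ell L$ follows from the irreducibility of $\mathrm{ind}_U^{P_0}\bar\psi$ as a $P_0$-module — matches the paper and is fine. The genuine gap is in the existence of the integral model, and you correctly flag it as the main obstacle without actually closing it. Your proposal is to generate $L$ as the $\O_\ell[\GL_n(k)]$-submodule spanned by a Whittaker-type vector $v$, or to start from the tautological $\O_\ell$-structure on $\mathrm{ind}_U^{P_0}(\psi,\O_\ell)$ and ``spread'' it to $\GL_n(k)$. Neither of these is automatically a lattice: it is easy to produce a $\GL_n(k)$-stable $\O_\ell$-submodule that spans $V$ over $K$, but one must also show it is \emph{admissible} (i.e.\ for each compact open $K'$, the $K'$-fixed part is a finitely generated $\O_\ell$-module spanning $\pi^{K'}$), and this is precisely where the unboundedness of $\GL_n(k)$ bites. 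There is no a priori control on how the $\O_\ell$-span of $\{gv\}$ grows as $g$ ranges over $\GL_n(k)$; indeed, for a representation that is not integral (e.g.\ one with non-integral central character), the same construction produces a $\GL_n(k)$-stable $\O_\ell$-submodule that is not a lattice, so some input about $\pi$ beyond having a Whittaker model is essential. Invoking ``Vignéras's extension of Bernstein--Zelevinsky'' for this point is also misplaced: the B--Z/derivative machinery gives the structure of $\pi|_{P_0}$, not the existence of a $\GL_n(k)$-invariant lattice.

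The paper resolves this by a different and much shorter route: by Bushnell--Kutzko, any supercuspidal of $\GL_n(k)$ is compactly induced, $\pi\cong\mathrm{ind}_J^{G}\lambda$, from a finite-dimensional representation $\lambda$ of a subgroup $J$ that is open and compact modulo the centre. Since $J$ is compact-mod-centre and the central character is assumed integral, $\lambda$ admits a $J$-stable $\O_\ell$-lattice $\Lambda$ (as for a finite group), and then $\mathrm{ind}_J^G\Lambda$ is manifestly a $\GL_n(k)$-stable lattice in $\pi$. This reduces the global integrality question to the compact(-mod-centre) case, where it is elementary, and then the rest of the argument is the Lemma exactly as you use it. So your overall architecture is right, but you should replace your Whittaker-lattice construction with the Bushnell--Kutzko compact-induction argument (or at least cite the integrality theorem as a separate ingredient rather than folding it into the B--Z discussion).
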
 

\begin{proof} {\sf By Bushnell-Kutzko, any supercuspidal representation of $\GL_n(k)$ is obtained by induction of a finite dimensional
representation of a subgroup of $\GL_n(k)$ which contains the center of $\GL_n(k)$, and is compact modulo center. Since any representation of a finite group
has an integral model, it follows that $\pi$ with integral central character has an integral model. The rest follows from the previous lemma.}
\end{proof}

}

\section{Bad primes}
Representation theory of finite groups behaves same in all 
characteristics $\ell$ as in characteristic zero as long as $\ell$ is coprime to 
the order of the group. When dealing with $p$-adic groups such as $G = \GL_n(\Q_p)$,
there are again only finitely many  `bad' primes, where the theory differs 
from theory in characteristic zero. These are the primes that divide $|\GL_n(\F_p)| = p^{n(n-1)/2}(p^n - 1)\cdots (p-1)$. We say a few words on how this comes about.

A {\em supernatural number} is a map from the set of primes in $\Z \to \N \cup \{\infty\}$, it could be of the form $p_1^{n_1} \cdots p_k^{\infty} \cdots p_r^{n_r} \cdots$.

For a profinite group $G$, we define its pro-order to be the l.c.m. of 
$|G/N|$ as $N$ runs over a system of neighborhoods of the identity. 
For example, the pro-order of a pro-$p$ group is $p^{\infty}$.
For a locally compact group such as $\GL_n(\Q_p)$, this is the l.c.m. of pro-orders of all compact open subgroups.
The pro-order of $\GL_n(\Q_p) = |\GL_n(\F_p)| p^{\infty}$.
This can be computed by observing that any compact open subgroup of $\GL_n(\Q_p)$ is contained in a conjugate of $\GL_n(\Z_p)$.

\section{Modular Langlands correspondence}

{\sf 
In this section we discuss the Local Langlands correspondence which is proved by 
Vigneras for $\GL_n(k)$ for $\ell \ne p$, and some form of it is expected for $\ell =p$.

\begin{conj}[\sf Modular LLC] Let $k$ be a $p$-adic field.
Then there exists a surjective map $(\bar{\pi} \rightarrow {\sigma}_{\bar{\pi}})$ from irreducible modular representations of $\GL_n(k)$ to 
 $n$-dimensional semisimple modular representations of the 
Weil-Deligne  group $W_k'$ making the following reduction mod $\ell$ diagram commute, where $\bar{\pi}$ in the diagram is a 
certain (distinguished) Jordan-H\"older factor of reduction mod $\ell$ of $\pi$:

$$\begin{CD}
\pi @>>> \bar{\pi}{} \\
@VVV @VVV \\
\sigma_{\pi} @>>> {\sigma}_{\bar{\pi}}.
\end{CD}$$

In some more detail, one starts with a Galois representation 
$\sigma_\pi: W_{k} \rightarrow \GL_n(\bar{\Q}_\ell)$ 
($\ell = p$ allowed), which we assume is 
integral, i.e., can be represented by  $\sigma_\pi: W_{k} \rightarrow \GL_n(\bar{\Z}_\ell)$, then the corresponding
irreducible admissible representation of $\GL_n(k)$ on a vector space over $\bar{\Q}_\ell$ is integral, i.e., leaves a $\bar{\Z}_\ell$-lattice  
invariant, and the corresponding reduction $\mod \ell$ of the representation $\pi$ has a particular Jordan-H\"older
component $\bar{\pi}$ which is an irreducible $\mod \ell$ representation of $\GL_n(k)$.
\end{conj}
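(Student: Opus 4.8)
The plan is to treat $\ell\ne p$ and $\ell=p$ quite differently: for $\ell\ne p$ the statement is essentially a theorem of Vign\'eras and I would reconstruct its strategy, while for $\ell=p$ I can only outline a program. Throughout I take as input the classical local Langlands correspondence over $\bar{\Q}_\ell$ of Harris--Taylor and Henniart, $\pi\leftrightarrow\sigma_\pi$, together with its compatibility with twists, duals, determinants and parabolic induction.

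For $\ell\ne p$, I would first make sense of both vertical arrows. On the Galois side an integral $\sigma_\pi\colon W_k\to\GL_n(\bar{\Z}_\ell)$ reduces to a semisimple $\bar\sigma_\pi\colon W_k\to\GL_n(\bar{\F}_\ell)$, well defined by Brauer--Nesbitt. On the automorphic side one checks that integrality of $\sigma_\pi$ forces $\pi$ to be integral --- matching central characters under the correspondence and using that the Bushnell--Kutzko compact-mod-centre construction already produces lattices for supercuspidals, hence for their parabolic inductions and all subquotients --- so that $\pi$ leaves a $\GL_n(k)$-stable $\bar{\Z}_\ell$-lattice invariant with a well-defined semisimple reduction. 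Next I would reduce everything to the supercuspidal case using the Zelevinsky/Bernstein--Zelevinsky classification, which survives mod $\ell$ by Vign\'eras: realise $\pi$ as the generic (equivalently Langlands) constituent of a parabolic induction of supercuspidals $\rho_1\otimes\cdots\otimes\rho_r$ and reduce each $\rho_i$. As shown in Section~8, a supercuspidal with integral central character has an integral model whose reduction mod $\ell$ stays irreducible, because its restriction to the mirabolic $P_0$ is the irreducible representation $\Ind_U^{P_0}\bar\psi$; on the Galois side the induced/monodromy structure reduces compatibly, so the only real content is the supercuspidal-to-supercuspidal matching.

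It then remains to pin down the distinguished factor $\bar\pi$ and to prove surjectivity. I would define $\bar\pi$ to be the unique generic Jordan--H\"older constituent of the reduction mod $\ell$ of $\pi$ (taken of Whittaker type): existence and uniqueness of this constituent follow from the mod $\ell$ theory of derivatives and the Bernstein--Zelevinsky filtration of the restriction to $P_0$, which as recalled in Section~9 has the same shape mod $\ell$, so that the reduced Whittaker functional survives on exactly one constituent. Setting $\sigma_{\bar\pi}:=\bar\sigma_\pi$ makes the square commute by construction. Surjectivity reduces to a lifting statement on the Galois side --- every $n$-dimensional semisimple $\bar{\F}_\ell$-representation of $W'_k$ is the reduction of an integral $\ell$-adic one, which is a standard consequence of the prime-to-$\ell$ structure of wild inertia together with the trivial lifting of the monodromy operator --- and this, combined with surjectivity of the classical LLC, hits every $\bar\sigma$.

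The main obstacle, and the reason the statement is only a conjecture in general, is $\ell=p$. There one has no Bernstein--Zelevinsky machinery; the reduction mod $p$ of a supercuspidal need not be irreducible (compare the finite-group phenomenon of Lemma~4.2, where a cuspidal representation of $\GL_2(\F_p)$ reduces to a module of length two in general), so even the notion of a distinguished constituent is subtle; and the correct Galois-theoretic target is itself in question --- whether one should use Weil--Deligne representations or $p$-adic Hodge-theoretic data such as $(\varphi,\Gamma)$-modules. For $n=2$ over $\Q_p$ one proceeds via Colmez's functor and the completed-cohomology results of Breuil and Emerton, which is what produces $\bar\pi$ there; for $n>2$, or for general $k$, essentially nothing beyond fragmentary results is available, and it is precisely here that any attempt at the conjecture gets stuck.
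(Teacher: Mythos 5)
The statement you were asked to prove is labelled a Conjecture, and the paper offers no proof of it: Section 10 only reports its status, citing Vign\'eras [V, V2] for $\ell\ne p$, Breuil and Colmez for $\GL_2(\Q_p)$ when $\ell=p$, and declaring the general $\ell=p$ case open. So there is no proof in the paper to compare against; the question is only whether your sketch for $\ell\ne p$ would actually close the case the paper attributes to Vign\'eras. It would not, as written. Your construction defines $\sigma_{\bar\pi}$ by lifting: pick an $\ell$-adic $\pi$ whose reduction has $\bar\pi$ as its distinguished (generic) constituent, and set $\sigma_{\bar\pi}:=\bar\sigma_\pi$. Commutativity of the square then holds ``by construction'' only because you have built it in; but the map is supposed to be defined on \emph{all} irreducible $\bar\F_\ell$-representations of $\GL_n(k)$, and your recipe only covers those that arise as the distinguished constituent of some integral $\ell$-adic representation. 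You quietly assume every $\bar\pi$ is of that form. This is exactly the automorphic-side lifting problem the paper's own Remark singles out as the weak link (``this, it seems, is expected but not known in the mod $\ell\ne p$ case''). You also never address well-definedness: if $\bar\pi$ is the distinguished constituent of the reductions of two non-isomorphic lifts $\pi$ and $\pi'$, why is $\bar\sigma_\pi\cong\bar\sigma_{\pi'}$? That independence is precisely the nontrivial content of the commuting square, not a tautology. Vign\'eras' actual theorem does not proceed by lifting; she constructs the mod $\ell$ correspondence directly from the classification of mod $\ell$ supercuspidals (via their Whittaker models and the mirabolic argument you cite from Section~8) and a mod $\ell$ Zelevinsky classification, and only afterwards checks compatibility with reduction. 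Your sketch rediscovers the ``forced'' strategy of the paper's Remark, with the same gap the paper itself identifies.

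Your treatment of $\ell=p$ matches the paper's assessment --- known only for $\GL_2(\Q_p)$ via Colmez's functor and completed cohomology, with essentially nothing beyond --- and you correctly flag the failure of the Bernstein--Zelevinsky machinery and the unclear Galois-side target as the obstructions. But here, as in the paper, nothing is being proved; the conjecture remains open, and you should present this part explicitly as a survey of the state of the art rather than as an outline of a proof.
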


\begin{que}
Where does modular LLC stand?
\end{que}

\noindent {$\ell \ne p$} : It is completely understood for mod ${\ell}$ for $\ell \ne p$ for $G = \GL_n(k)$ and is due to Vigneras. In this case, the Langlands correspondence sets up a bijective correspondence between irreducible admissible representations of $\GL_n(k)$ and $n$-dimensional 
semi-simple representations of the Weil-Deligne group. In this case, irreducible modular representations of $W_k$ of dimension $n$ 
correspond to irreducible modular supercuspidal representations of $\GL_n(k)$; both of these objects can be lifted to characteristic zero.

It may be remarked that 
both in Vigneras' work mod $\ell$, as well as in the `usual' case over $\C$, there is a weaker
form  of the Langlands correspondence, called semi-simple Langlands correspondence, which
ignores the presence of the `Deligne part of the Weil-Deligne group'. The simplifying 
aspect  of the Weil group being that it does not see the difference between 
different components of a principal series representation induced from a cuspidal data.
\v2

\noindent{$\ell = p$} : The case mod $p$ has turned out to be much harder.
This is understood only for $\GL_2(\Q_p)$! In this case, the Langlands correspondence is not a bijection since there are many more admissible representations of $\GL_n(k)$ 
over $\bar{\F}_p$ than corresponding Galois representations. In the case of
$\GL_2(k)$, the principal series representations $\chi_1 \times \chi_2$ as well as 
$\chi_2 \times \chi_1$ which as we saw earlier are distinct representations of $\chi_1 \ne \chi_2$, but both have the same parameter which is $\chi_1 \oplus \chi_2$. We refer to the ICM article [Br] of Breuil for a survey of the subject of mod $p$ representations which continues to be
a very active and still very mysterious subject. The subject of the mod $p$ representations is, so to say, the first step of a
$p$-adic representation theory of $p$-adic groups (like real representations of real groups) which is another 
big subject now, and for which we refer to  the article of Colmez [Co] as a sample of the great developments happening  in this field.

\vspace{4mm}

\noindent{\bf Remark :} One way to go about modular LLC is to force the correspondence 
$(\bar{\pi} \rightarrow {\sigma}_{\bar{\pi}})$ 
from irreducible modular representations of $\GL_n(k)$ to 
 $n$-dimensional semisimple modular representations of the 
Weil-Deligne  group $W_k'$ such that the  diagram in conjecture $(10.1)$ commutes, i.e., beginning with the modular parameter
${\sigma}_{\bar{\pi}}$, look at all the irreducible representations $\pi$ of $\GL_n(k)$ in characteristic zero whose Langlands parameter
mod $\ell$ is ${\sigma}_{\bar{\pi}}$. (By a theorem due to Fong-Swan, parameters can be lifted to characteristic zero.)   
Given the flexibility in $\pi$ here, one might hope that there is one for which
$\bar{\pi}$ is irreducible. Declare the parameter of  reduction mod $\ell$ of {\it all} such $\pi$'s (with $\bar{\pi}$ irreducible) to be $\sigma_{\bar{\pi}}$. The proposal made here partly depends on lifting a mod $\ell$ or mod $p$ 
representation of $\GL_n(k)$ to characteristic zero; this, it seems, is expected but not known in the mod $\ell \ne p$ case, whereas for $\ell = p$, it seems to be hopelessly false as there may not even be an  irreducible 
representation in characteristic zero with finite length mod $p$ containing a particular supercuspidal representation.

\section{Reduction mod $\ell$ and Brauer theory}
The previous section on the Local Langlands correspondence for $\GL_n(k)$ 
used the notions related to reduction mod $\ell$ of Galois representations (where
$\ell = p$ is allowed). For general reductive groups $G$, the $\bar{\Q}_\ell$-smooth 
representations  of $G(k)$ are supposed to be related by the Langlands 
correspondence to Galois 
representations which take values inside the L-group ${}^LG (\bar{\Q}_\ell)$, and 
one must define an appropriate notion of reduction mod $\ell$ of Galois 
representations $\sigma_\pi: W_{k} \rightarrow {}^LG(\bar{\Q}_\ell)$, 
which should now be  a `semi-simple representation' $\sigma_\pi: W_{k} \rightarrow {}^LG(\bar{\F}_\ell)$. (By a semi-simple representation $\sigma: F \rightarrow G(\bar{k})$ of an
abstract group $F$ inside an algebraic group $G$,  we mean one whose image can be 
conjugated to land inside a Levi subgroup of any parabolic in which the image of $\sigma$ lies.)

The first observation regarding $\sigma_\pi: W_{k} \rightarrow {}^LG(\bar{\Q}_\ell)$, 
with values inside a finite extension $E$ of $\Q_\ell$, is that since 
we are allowed to take further extensions of $E$, any bounded subgroup of 
${}^LG(\bar{\Q}_\ell)$ can be conjugated  to sit inside a  maximal parahoric subgroup $Q({\O}_E)$ instead of the 
more obvious, the normalizer of a parahoric subgroup. (This subtlety is relevant only when ${}^LG$ is not 
semi-simple and simplyconnected.)
 Taking the maximal reductive quotient of $Q(\O_E)$ gives us 
a subgroup of ${}^LG(\bar{\F}_\ell)$, and therefore we get $\bar{\sigma}_\pi: W_k \rightarrow {}^LG(\bar{\F}_\ell)$. 
We need to 
`semi-simplify' this homomorphism.    For this, suppose $U$,  the unipotent radical 
of a parabolic $P$ inside ${}^LG(\bar{\F}_\ell)$,  is left invariant under 
the map $\bar{\sigma}_\pi: W_k \rightarrow {}^LG(\bar{\F}_\ell)$, i.e., 
$U$ is left invariant under the conjugation action of $\bar{\sigma}(w)$ for $w \in W_k$.
Choose such a 
$U$ with the corresponding parabolic $P$ to be minimal possible for this property.
Since the normalizer of $U$ is $P$,  the map, $\bar{\sigma}_\pi: W_k \rightarrow {}^LG(\bar{\F}_\ell)$ lands inside $P(\bar{\F}_\ell)$, and then going modulo $U$,
we get a  map, $\bar{\sigma}_\pi: W_k \rightarrow M(\bar{\F}_\ell)$, 
which seems
to have the right to be called the semi-simplification of 
the map, $\bar{\sigma}_\pi: W_k \rightarrow {}^LG(\bar{\F}_\ell)$, and will be called the reduction mod $\ell$ of $\sigma_\pi: W_{k} \rightarrow {}^LG(\bar{\Q}_\ell)$. 
A generalization of Brauer theory is called for to prove that this 
$\bar{\sigma}_\pi: W_k \rightarrow M(\bar{\F}_\ell)$ is independent of the 
choices made (up to conjugation in ${}^LG(\bar{\F}_q)$), and this then is 
the  
reduction mod $\ell$ of $\sigma_\pi: W_{k} \rightarrow {}^LG(\bar{\Q}_\ell)$

}

\section{Bibliography}
[BL] Barthel, L., Livne, R.: {\it Irreducible modular representations of $\GL_2$ of a local field.} Duke Math. J. 75(2), 261-292 (1994).

[Br] Breuil, C.: {\it The emerging $p$-adic Langlands programme}, Proceedings of I.C.M. 2010, Vol. II, 203-230.

[Co] Colmez, P.: {\it Représentations de $GL_2(\Q_p)$ et $(\phi,\Gamma)$-modules.} Ast\'erisque 330, 281-509 (2010).
 
[DL] Deligne, P.; Lusztig, G. {\it Representations of reductive groups over finite fields.} Ann. of Math. (2) 103 (1976), no. 1, 103-161.

[HT] Harris, M.,\; Taylor, R. {\it On the geometry and cohomology of some simple Shimura varieties,} Annals of Mathematics studies, no. 151, Princeton University Press, Princeton, NJ.

[HJ] Haastert, B.;  Jantzen, J.C. {\it Filtrations of the Discrete Series of $\SL_2(\F_q)$ via Crystalline
Cohomology}, J. Algebra 132, 77-103 (1990).

[Hen] Henniart, G. {\it Une preuve simple des conjectures de Langlands pour $\GL_n$ sur un corps p-adique}, Invent. Math. 130 (2000) 439-455.

[Her] Herzig, F. {\it The classification of irreducible admissible mod p representations of a $p$-adic $\GL_n$}
Invent. Math. 186 (2011), no. 2, 373-434.

[L] Lusztig, G. {\it Characters of reductive groups over a finite field.} Annals of Mathematics Studies, 107. Princeton University Press, Princeton, NJ, 1984.

[V] Vign\'eras, M.-F. {\it  Représentations $\ell$-modulaires d'un groupe 
r\'eductif $p$-adique avec $\ell \not = p$.} 
Progress in Mathematics, 137. Birkhäuser Boston, Inc., Boston, MA, 1996.

[V2]  Vign\'eras, M.-F.: {\it Correspondance de Langlands semi-simple pour $\GL(n,F)$ modulo $\ell \not = p$.}
Invent. Math. 144 (2001), no. 1, 177-223. 

}
\end{document}